\definecolor{dblue}{rgb}{0,0,0.6}
\def\AA{\mathcal A}
\def\EE{\mathcal E}
\def\FF{\mathcal F}
\def\GG{\mathcal G}
\def\HH{\mathcal H}
\def\KK{\mathcal K}
\def\LL{\mathcal L}
\def\MM{\mathcal M}
\def\OO{\mathcal O}
\def\WW{\mathcal W}
\def\UU{\mathcal U}
\def\Z{\mathbb{Z}}
\newcommand{\bbZ}{\mathbb{Z}}
\newcommand{\bbC}{\mathbb{C}}
\newcommand{\bbP}{\mathbb{P}}
\newcommand{\rk}{\mathrm{rk}\,}
\renewcommand{\le}{\leqslant}
\renewcommand{\ge}{\geqslant}
\newcommand{\Rder}{\mathrm{R}}
\newcommand{\Lder}{\mathrm{L}}
\newcommand{\Ext}{\mathrm{Ext}}
\newcommand{\IExt}{\EuScript{E}xt}
\newcommand{\Hom}{\mathrm{Hom}}
\newcommand{\IHom}{\EuScript{H}om}
\newcommand{\Db}{\EuScript{D}^b}
\newcommand{\Ati}[1]{\mathrm{At}_{#1}}
\newcommand{\Tr}{\mathrm{Tr}}
\newcommand{\CH}{\mathrm{CH}}
\newcommand{\AJ}{\mathrm{AJ}}
\newcommand{\IJac}[1]{\mathrm{J}(#1)}
\newcommand{\Jac}{\mathrm{J}}
\newcommand{\Gr}{\mathrm{Gr}}
\newcommand{\coker}{\mathrm{coker}}
\newcommand{\lrarr}{\longrightarrow}
\newcommand{\hrarr}{\hookrightarrow}
\newcommand{\sdot}{{\raisebox{0.16ex}{$\scriptscriptstyle\bullet$}}}
\newtheorem{defn}{Definition}[section]
\newtheorem{prop}[defn]{Proposition}
\newtheorem{thm}[defn]{Theorem}
\newtheorem{lemma}[defn]{Lemma}
\theoremstyle{remark}
\title{On the geometry of the Lehn--Lehn--Sorger--van Straten eightfold}
\author{Evgeny Shinder}
\address{The University of Sheffield,
School of Mathematics and Statistics,
The Hicks Building, Hounsfield Rd, Sheffield S3 7RH}
\author{Andrey Soldatenkov}
\address{Mathematisches Institut, Universit\"at Bonn,
Endenicher Allee 60, 53115 Bonn, Germany}
\subjclass[2010]{14F05, 53C26}
\keywords{Moduli spaces of sheaves, IHS manifolds, cubic fourfolds, Atiyah class}
\begin{document}

\begin{abstract}
In this note we make a few remarks about the geometry of the
holomorphic symplectic manifold $Z$ constructed in
\cite{LLSvS} as a two-step contraction of the variety of twisted cubic curves
on a cubic fourfold $Y\subset \bbP^5$. 

We show that $Z$
is birational to a component of the moduli space of stable sheaves 
in the Calabi-Yau subcategory of the derived category of $Y$.
Using this description we deduce that the twisted
cubics contained in a hyperplane section $Y_H = Y \cap H$ of $Y$ give rise to a
Lagrangian subvariety $Z_H \subset Z$. For a generic choice of the hyperplane, 
$Z_H$ is birational to the theta-divisor in the intermediate Jacobian $\IJac{Y_H}$.
\end{abstract}

\maketitle

\tableofcontents

\section{Introduction}

We work over the field of complex numbers. Throughout the paper $Y\subset \bbP^5$ is
a smooth cubic fourfold not containing a plane. In \cite{LLSvS} the variety $M_3(Y)$ of generalized
twisted cubic curves on $Y$ was studied. It was shown that $M_3(Y)$ is 10-dimensional,
smooth and irreducible. Starting from this variety an 8-dimensional irreducible holomorphic symplectic
(IHS) manifold $Z$ was constructed. More precisely, it was shown that there exist morphisms
\begin{equation}\label{eqn_contractions}
M_3(Y)\stackrel{a}{\longrightarrow} Z'\stackrel{\sigma}{\longrightarrow} Z,
\end{equation}
and
\begin{equation}\label{eqn_embedding}
\mu\colon Y\hookrightarrow Z,
\end{equation}
where $a$ is a $\bbP^2$-fibre bundle and $\sigma$ is the blow-up along the image
of $\mu$. 

It was later shown in \cite{AL} that $Z$ is birational --- and hence deformation
equivalent --- to a Hilbert scheme of four points on a K3 surface.



In this paper we present another point of view on $Z$. We show that an open
subset of $Z$ can be described as a moduli space of Gieseker stable torsion-free sheaves of rank $3$ on $Y$.

Kuznetsov and Markushevich \cite{KM} have constructed a closed two-form on any moduli space of sheaves on $Y$.
Properties of the Kuznetsov-Markushevich form are known to be closely related to the structure of the 
derived category of $Y$.
The bounded derived category $\Db(Y)$ of coherent sheaves on $Y$ 
has an exceptional collection $\OO_Y$, $\OO_Y(1)$, $\OO_Y(2)$ with right orthogonal
$\AA_Y$, so that $\Db(Y)=\langle\AA_Y, \OO_Y, \OO_Y(1), \OO_Y(2)\rangle$. The category
$\AA_Y$ is a Calabi-Yau category of dimension two, meaning that its Serre functor is the shift by
$2$ \cite[Section 4]{K}. 

It was shown in \cite{KM} that
the two-form on moduli spaces of sheaves on $Y$ is non-degenerate if the sheaves lie in $\AA_Y$. 
The torsion-free sheaves mentioned above lie in $\AA_Y$.
This gives an alternative description of the symplectic form on $Z$: 

\medskip
{\bf Theorem \ref{thm_MMF}.}
{\it The component $\MM_F$ of the moduli space of Gieseker stable rank 3 sheaves on $Y$ 
with Hilbert polynomial $\frac38 n^4 + \frac94 n^3 + \frac{33}{8} n^2 + \frac94 n$
is birational to the IHS manifold $Z$. Under this birational equivalence 
the symplectic form on $Z$ defined in \cite{LLSvS} corresponds
to the Kuznetsov-Markushevich form on $\MM_F$.}
\medskip


A similar approach relying on the description of an open part of $Z$ as a moduli space
was used by Addington and Lehn in \cite{AL} to prove that the variety $Z$ is a deformation
of a Hilbert scheme of four points on a K3 surface. In \cite{O} Ouchi considered the
case of cubic fourfolds containing a plane. He proved that one can describe (a birational model) of
the LLSVS variety as a moduli space of Bridgeland-stable objects in the derived category
of a twisted K3 surface. Moreover, in this situation one also has a Lagrangian embedding
of the cubic fourfold into the LLSVS variety as in (\ref{eqn_embedding}).

Another similar construction has been proposed by \cite{LMS} who proved that $Z$ is birational
to a component of the moduli space of stable vector bundles of rank $6$ on $Y$.

Using the birational equivalence between $Z$ and the moduli space of sheaves on $Y$
we show that twisted cubics lying in hyperplane sections $Y_H$ of $Y$ give rise
to Lagrangian subvarieties in $Z$ and discuss the geometry of these subvarieties:

\medskip
{\bf Theorem.}
{\it Denote by $Z_H$ the image in $Z$ of twisted cubics lying in a hyperplane section $Y_H = Y\cap H$
under the map $a$ from (\ref{eqn_contractions}). If $Y$ and $H$ are generic, then $Z_H$ is a Lagrangian subvariety of $Z$
which is birational to the theta-divisor of the intermediate Jacobian of $Y_H$.}
\begin{proof} See Proposition \ref{prop_Lagr} and Theorem \ref{thm_AJ}.
\end{proof}

This is analogous to the case of lines on $Y$: it is well-known that lines on $Y$ form 
an IHS fourfold, and lines contained in hyperplane sections of $Y$ form Lagrangian
surfaces in this fourfold, see for example \cite{V}.

{\bf Acknowledgements} The authors would like to thank Alexander Kuznetsov, Daniel Huybrechts, Christoph Sorger,
Manfred Lehn and Yukinobu Toda for useful discussions and remarks.

\section{Twisted cubics and sheaves on a cubic fourfold}

\subsection{Twisted cubics on cubic surfaces and determinantal representations}

Let us recall the structure of the general fibre of the map $a\colon M_3(Y)\to Z'$
in (\ref{eqn_contractions}). We follow \cite{LLSvS} in notation and
terminology and we refer to \cite{EPS, LLSvS} for all details about the geometry
of twisted cubics.

Consider a cubic surface $S=Y\cap \bbP^3$ where $\bbP^3$
is a general linear subspace in $\bbP^5$. There exist several families of generalized
twisted cubics on $S$. Each of the families is isomorphic to $\bbP^2$ and these
are the fibres of the map $a$. The number of families depends on $S$. If the surface
is smooth there are 72 families, corresponding to 72 ways to represent $S$ as a
blow-up of $\bbP^2$ (and to the 72 roots in the lattice $E_6$). Each of the families
is a linear system which gives a map to $\bbP^2$. If $S$
is singular, generalized twisted cubics on it can be of two different types. Curves of
the first type are arithmetically Cohen-Macaulay (aCM), and those
of the second type are non-CM. The detailed description of their geometry on surfaces with different
singularity types can be found in \cite{LLSvS}, \S 2. For our purposes it is enough
to recall that the image in $Z'$ of non-CM curves under the map $a$ is exactly the
exceptional divisor of the blow-up $\sigma\colon Z'\to Z$ in (\ref{eqn_contractions}),
see \cite{LLSvS}, Proposition 4.1.

In this section we deal only with aCM curves and we also assume that
the surface $S$ has only ADE singularities. In this case every aCM curve belongs
to a two-dimensional linear system with smooth general member, just as in the case of smooth $S$ \cite[Theorem 2.1]{LLSvS}.
Moreover, these linear systems are in one-to-one correspondence with the determinantal
representations of $S$. Let us explain this in detail.

Let $S$ be a cubic surface in $\bbP^3$ with at most ADE singularities. Let $\alpha \colon S\hrarr \bbP^3$
denote the embedding and let $p\colon \tilde{S}\to S$ be the minimal resolution of singularities.
Take a general aCM twisted cubic $C$ on $S$ and let $\tilde{C} \subset \tilde{S}$ be its proper preimage.
Let $\tilde{L}=\OO_{\tilde{S}}(\tilde{C})$ be the corresponding line bundle and let $L=p_*\tilde{L}$ be its direct image.

\begin{lemma}\label{lem_L} The sheaf $L$ has the following properties:

(1) $H^0(S,L)=\bbC^3$, $H^k(S,L)=0$ for $k \ge 1$; $H^{k}(S, L(-1)) = H^k(S, L(-2)) = 0$ for $k\ge 0$;

(2) We have the following resolution:
\begin{equation}\label{eqn_determinantal}
0\lrarr\OO_{\bbP^3}(-1)^{\oplus 3}\stackrel{A}{\lrarr}\OO_{\bbP^3}^{\oplus 3}\lrarr \alpha_*L\lrarr 0,
\end{equation}
where $A$ is given by a $3\times 3$ matrix of linear forms on $\bbP^3$, and the surface
$S$ is the vanishing locus of $\det A$;

(3) $\IExt^k(L,L) = 0$ for $k\ge 1$.
\end{lemma}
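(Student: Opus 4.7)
The plan is to establish (1) by computing cohomology on the minimal resolution $\tilde{S}$, deduce (2) from (1) via a homological/numerical argument on $\bbP^{3}$, and obtain (3) from the observation that $L$ is an honest line bundle.

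For (1), the first step is to note that a general aCM twisted cubic $C$ avoids the finite singular locus of $S$, so that $\tilde{C}\to C$ is an isomorphism and $L=\OO_{S}(C)$ is a line bundle on $S$. Using adjunction on $\tilde{S}$ together with the crepancy $K_{\tilde{S}}=p^{*}K_{S}=-p^{*}H$ (Du Val singularities), one computes $\tilde{C}^{2}=1$ and $\tilde{C}\cdot K_{\tilde{S}}=-3$, so $\chi(\tilde{L})=3$ by Riemann--Roch. The divisor $\tilde{L}-K_{\tilde{S}}=\tilde{C}+p^{*}H$ is nef (it meets exceptional curves in $0$ and non-exceptional ones positively) and big, since $(\tilde{C}+p^{*}H)^{2}=10$, so Kawamata--Viehweg yields $H^{\ge 1}(\tilde{S},\tilde{L})=0$. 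Rational singularities together with the triviality of $\tilde{L}$ along the exceptional fibres imply $R^{\ge 1}p_{*}\tilde{L}=0$, and the cohomology descends to $S$. For $L(-1)$ and $L(-2)$, Serre duality on $\tilde{S}$ reduces the vanishings to $H^{*}(\tilde{S},-\tilde{C})=0$ (immediate from $0\to\OO(-\tilde{C})\to\OO_{\tilde{S}}\to\OO_{\tilde{C}}\to 0$) and to $H^{*}(S,\II_{C/S}(1))=0$ (from $0\to\II_{C/S}(1)\to\OO_{S}(1)\to\OO_{C}(1)\to 0$ together with the nondegeneracy of the twisted cubic).

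For (2), since $L$ is a line bundle on the Cohen--Macaulay surface $S$, the sheaf $\alpha_{*}L$ has depth $2$ at every point of its support. Auslander--Buchsbaum on $\OO_{\bbP^{3}}$ gives it projective dimension $1$, so there is a minimal graded free resolution $0\to E_{1}\to E_{0}\to\alpha_{*}L\to 0$. The three generating sections of $L$ together with $h^{0}(L(-1))=0$ force $E_{0}=\OO_{\bbP^{3}}^{\oplus 3}$. Writing $E_{1}=\bigoplus_{a\ge 1}\OO_{\bbP^{3}}(-a)^{\oplus b_{a}}$ (with $a\ge 1$ by minimality), Riemann--Roch on $S$ yields $\chi(\alpha_{*}L(n))=\tfrac{3(n+1)(n+2)}{2}$, and matching against $\chi(E_{0}(n))-\chi(E_{1}(n))$ gives the constraints $\sum b_{a}=3$ and $\sum b_{a}a=3$, forcing $b_{1}=3$ and $b_{a}=0$ otherwise. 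Hence $E_{1}=\OO_{\bbP^{3}}(-1)^{\oplus 3}$ and $A$ is a $3\times 3$ matrix of linear forms; the Fitting ideal of the cokernel equals $(\det A)$ and cuts out $S$ scheme-theoretically, so $\det A$ is a nonzero scalar multiple of the cubic defining $S$.

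Part (3) is immediate: because $L$ is a line bundle on $S$, $\IExt^{k}(L,L)=\IExt^{k}(\OO_{S},\OO_{S})=0$ for $k\ge 1$. The most delicate step in the whole argument is the numerical/minimality step in (2) fixing the form of $E_{1}$, which uses both the full strength of the cohomology vanishings from (1) and the uniqueness theorem for minimal graded free resolutions over the polynomial ring. Everything else reduces to standard manipulations on $\tilde{S}$ or to long exact sequences on $S$.
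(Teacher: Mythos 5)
Your proposal founders on its opening claim: that a general aCM twisted cubic $C$ on an ADE-singular cubic surface $S$ avoids $\mathrm{Sing}(S)$, so that $L$ is a line bundle. This is false for some of the families. On $\tilde{S}$ a family corresponds to a class $\tilde{L}$ with $\tilde{L}^2=1$ and $\tilde{L}\cdot K_{\tilde{S}}=-3$, and nothing forces $\tilde{L}\cdot E=0$ for every exceptional $(-2)$-curve $E$. For instance, on a one-nodal $S$ (node coming from three collinear points among the six blown up, $E=\ell-e_1-e_2-e_3$), the family $\tilde{L}=\ell$ has $\tilde{L}\cdot E=1$, so \emph{every} member of that family passes through the node, and $L=p_*\tilde{L}$ is a rank-one maximal Cohen--Macaulay sheaf that is not locally free there. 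These families cannot be discarded: they are needed for $Z^\circ$ to have complement of codimension $4$, on which Lemma \ref{lem_symplectic} depends. Consequently your part (3) --- ``immediate because $L$ is a line bundle'' --- skips what is in fact the hardest point of the lemma. The paper proves (3) by first establishing the global vanishing $\Ext^k(L,L)=0$ for $k\ge 1$ (comparing deformations of $\alpha_*L$, computed from (\ref{eqn_determinantal}), with deformations of its support $S$), then noting that $\IExt^k(L,L)$ has finite support so that $\Ext^k(L,L)=H^0(S,\IExt^k(L,L))$, and finally killing \emph{all} higher local Ext sheaves at once using the $2$-quasi-periodic free resolution of $L$ on $S$ furnished by the matrix factorization $A$. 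None of this is captured by your argument.

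The same false premise also leaks into your part (1): the justification of $R^{\ge 1}p_*\tilde{L}=0$ via ``triviality of $\tilde{L}$ along the exceptional fibres'', and the reduction of the $\tilde{L}^\vee(1)$-vanishing to $H^*(S,\II_{C/S}(1))$, both presuppose local freeness. These are repairable --- use the triple $0\to\OO_{\tilde{S}}\to\tilde{L}\to\tilde{L}|_{\tilde{C}}\to 0$ and work on $\tilde{S}$ throughout, as the paper does --- and your Kawamata--Viehweg computation for $H^{\ge 1}(\tilde{S},\tilde{L})=0$ is a fine substitute for the paper's Serre-duality argument. Part (2) via Auslander--Buchsbaum, generation in degree zero (which you should justify by the Castelnuovo--Mumford $0$-regularity of $\alpha_*L$, itself a consequence of (1)), and matching of Hilbert polynomials is a legitimate alternative to the paper's mutation over the exceptional collection of $\bbP^3$, and it only uses that $\alpha_*L$ is Cohen--Macaulay of codimension one, which does hold. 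But as written the proposal does not prove (3), and (3) is the substantive assertion of the lemma.
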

\begin{proof}
We note that the map $\alpha\circ p\colon \tilde{S}\to \bbP^3$ is given by the anticanonical
linear system on $\tilde{S}$, so we will use the notation $K_{\tilde{S}} = \OO_{\tilde{S}}(-1)$.

{\it (1)} First we observe that $\Rder^mp_*\tilde{L} = 0$ for $m\ge 1$. This follows from the long
exact sequence of higher direct images for the triple
\begin{equation}\label{eqn_triple}
0\lrarr\OO_{\tilde{S}}\lrarr\tilde{L}\lrarr \OO_{\tilde{C}}\otimes \tilde{L}\lrarr 0,
\end{equation}
because the singularities of $S$ are rational, so that $\Rder^m p_*\OO_{\tilde{S}} = 0$ for $m \ge 1$ and the map $p$ induces an embedding of
$\tilde{C}$ into $S$, so that $\Rder^m p_*$ vanishes on sheaves supported on $\tilde{C}$ for $m \ge 1$.

Analogously, $\Rder^mp_*\tilde{L}(-1) = \Rder^mp_*\tilde{L}(-2) = 0$ for $m\ge 1$.
Hence it is enough to verify the cohomology vanishing for $\tilde{L}$.

The linear system $|\tilde{L}|$ is two-dimensional and base point free (we refer
to \S 2 of \cite{LLSvS}, in particular Proposition 2.5). We also know the intersection products
$\tilde{L}\cdot \tilde{L} = 1$, $\tilde{L}\cdot K_{\tilde{S}} = -3$ and $K_{\tilde{S}}\cdot K_{\tilde{S}} = 3$. Using Riemann-Roch
we find $\chi(\tilde{L}) = 3$ and $\chi(\tilde{L}(-1)) = \chi(\tilde{L}(-2)) = 0$.
We have $H^0(\tilde{S},\tilde{L}(-1)) = H^0(\tilde{S},\tilde{L}(-2)) = 0$
which is clear from (\ref{eqn_triple}) since $\tilde{L}|_{\tilde{C}} = \OO_{\bbP^1}(1)$ and
$\OO_{\tilde{S}}(1)|_{\tilde{C}}=\OO_{\bbP^1}(3)$. By Serre duality we have $H^2(\tilde{S},\tilde{L}) =
H^0(\tilde{S},\tilde{L}^\vee(-1))^* = 0$, $H^2(\tilde{S},\tilde{L}(-1)) = H^0(\tilde{S},\tilde{L}^\vee)^* = 0$
because $\tilde{L}^\vee$ is the ideal sheaf of $\tilde{C}$,
and $H^2(\tilde{S},\tilde{L}(-2)) = H^0(\tilde{S},\tilde{L}^\vee(1))^* = 0$. The last vanishing
follows from the fact that $C$ is not contained in any hyperplane in $\bbP^3$.
It follows that $H^1(\tilde{S},\tilde{L}) = H^1(\tilde{S},\tilde{L}(-1)) = H^1(\tilde{S},\tilde{L}(-2)) = 0$.

{\it (2)} We decompose the sheaf $\alpha_*L$ with respect to the full exceptional collection
$\Db(\bbP^3) = \langle \OO_{\bbP^3}(-1),\OO_{\bbP^3},\\ \OO_{\bbP^3}(1), \OO_{\bbP^3}(2)\rangle$.
From part {\it (1)} it follows that $\alpha_*L$ is right-orthogonal to $\OO_{\bbP^3}(2)$ and $\OO_{\bbP^3}(1)$.
The left mutation of $\alpha_*L$ through $\OO_{\bbP^3}$ is given by a cone of the morphism
$\OO_{\bbP^3}^{\oplus 3}\to \alpha_*L$ induced by the global sections of $L$.
This cone is contained in the subcategory generated by the exceptional object $\OO_{\bbP^3}(-1)$.
Hence it must be equal to $\OO_{\bbP^3}(-1)^{\oplus 3}[1]$,
and we obtain the resolution (\ref{eqn_determinantal}) for $\alpha_*L$.

{\it (3)} Since $L$ is a vector bundle outside of the singular points of $S$,
the sheaves $\IExt^k(L,L)$ for $k\ge 1$ must have zero-dimensional support.
It follows that it will be sufficient to prove that $\Ext^k(L,L)=0$ for $k\ge 0$.

We first compute $\Ext^k(\alpha_*L,\alpha_*L)$. Applying $\Hom(-,\alpha_*L)$ to (\ref{eqn_determinantal})
we get the exact sequence
$$
0\lrarr\Hom(\alpha_*L,\alpha_*L)\lrarr H^0(\bbP^3,\alpha_*L)^{\oplus 3}\lrarr H^0(\bbP^3,\alpha_*L(1))^{\oplus 3}\lrarr \Ext^1(\alpha_*L,\alpha_*L)\lrarr 0,
$$
where we use that $H^k(\bbP^3,\alpha_*L(m)) = 0$ for $k\ge 1$, $m\ge 0$ which is clear from (\ref{eqn_determinantal}).
This also shows that $\Ext^k(\alpha_*L,\alpha_*L) = 0$ for $k\ge 2$. We have $\dim\Hom(\alpha_*L,\alpha_*L)=1$ and
from the sequence above and (\ref{eqn_determinantal}) we compute $\dim\Ext^1(\alpha_*L,\alpha_*L)=19$.

The object $\Lder \alpha^*\alpha_*L$ is included into the triangle $\Lder \alpha^*\alpha_*L\to L\to L(-3)[2]\to \Lder \alpha^*\alpha_*L[1]$,
see \cite{KM}, Lemma 1.3.1. Applying $\Hom(-,L)$ to this triangle and using 
$\Ext^k(\Lder \alpha^*\alpha_*L,L) = \Ext^k(\alpha_*L,\alpha_*L)$ we get the exact sequence
$$
0\lrarr \Ext^1(L,L)\lrarr \Ext^1(\alpha_*L,\alpha_*L)\lrarr \Hom(L,L(3))\lrarr \Ext^2(L,L)\lrarr 0.
$$
The arrow in the middle is an isomorphism. To see this note that
$\Hom(L,L(3))=H^0(S,N_{S/\bbP^3})=\bbC^{19}$ and that all the deformations of $\alpha_*L$
are induced by the deformations of its support $S$. It follows that $\Ext^1(L,L)=\Ext^2(L,L)=0$.
As we have mentioned above the sheaves $\IExt^k(L,L)$ have zero-dimensional support for $k\ge 1$,
and from the local-to-global spectral sequence we see that $\Ext^k(L,L)=H^0(S,\IExt^k(L,L))$ for $k\ge 1$.
It follows that $\IExt^1(L,L)=\IExt^2(L,L)=0$. To prove the vanishing of higher $\IExt$'s we
construct a quasi-periodic free resolution for $L$. From (\ref{eqn_determinantal}) we see
that the restriction of the complex $\OO_{\bbP^3}(-1)^{\oplus 3}\stackrel{A}{\lrarr}\OO_{\bbP^3}^{\oplus 3}$
to $S$ will have cohomology $L$ in degree $0$ and $L(-3)$ in degree $-1$. Hence $L$ is
quasi-isomorphic to the complex of the form
$$
\ldots\lrarr\OO_S(-7)^{\oplus 3}\lrarr\OO_S(-6)^{\oplus 3}\lrarr\OO_S(-4)^{\oplus 3}\lrarr\OO_S(-3)^{\oplus 3}\lrarr\OO_S(-1)^{\oplus 3}\lrarr\OO_S^{\oplus 3}\lrarr 0.
$$
This complex is quasi-periodic of period two, with subsequent entries obtained by tensoring by $\OO_S(-3)$.
Applying $\IHom(-,L)$ to this complex we see that $\IExt^k(L,L)$ are also quasi-periodic, and
vanishing of the first two of these sheaves implies vanishing of the rest.
\end{proof}

Starting from $L$, we have constructed the determinantal representation of $S$.
Conversely, given a sequence (\ref{eqn_determinantal}),
generalized twisted cubics corresponding to this determinantal representation can
be recovered as vanishing loci of sections of $L$.
More detailed discussion of determinantal representations of cubic surfaces
with different singularity types can be found in \cite{LLSvS}, \S 3.

\subsection{Moduli spaces of sheaves on a cubic fourfold}
Let $S=Y\cap \bbP^3$ be a linear
section of $Y$ with ADE singularities and $L$ a sheaf which gives a determinantal
representation of  $S$ as in (\ref{eqn_determinantal}). Denote by $i\colon S\hrarr Y$
the embedding. We consider the moduli space of torsion sheaves on $Y$ of the form $i_*L$
to get a description of an open subset of $Z$.

\begin{lemma}\label{lem-unobs}
For any $u \in \Ext^1(i_*L, i_*L)$ its Yoneda square $u \circ u \in \Ext^2(i_*L, i_*L)$ is zero,
so that the deformations of $i_*L$ are unobstructed.
\end{lemma}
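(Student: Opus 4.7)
The plan is to identify $\Ext^1_Y(i_*L,i_*L)$ with the tangent space at $[i_*L]$ to an explicit smooth $8$-dimensional family of sheaves of the form $i_*L$ coming from moving the linear $\bbP^3\subset \bbP^5$. Since that family is smooth, every first-order deformation lifts to a genuine deformation, and its obstruction class $u\circ u$ automatically vanishes.

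First I would compute the normal bundle of $i\colon S\hookrightarrow Y$. Since $\bbP^3\subset \bbP^5$ is cut out transversely by two linear forms, $i$ is a regular closed embedding of codimension two, and from $N_{S/\bbP^5}\cong \OO_S(1)^{\oplus 2}\oplus \OO_S(3)$ together with the exact sequence $0\to N_{S/Y}\to N_{S/\bbP^5}\to N_{Y/\bbP^5}|_S=\OO_S(3)\to 0$ one finds $N_{S/Y}\cong \OO_S(1)^{\oplus 2}$. Applying the adjunction $\Ext^k_Y(i_*L,i_*L)\cong \Ext^k_S(\Lder i^*i_*L,L)$ and using the standard identification of the cohomology sheaves $\mathcal{H}^{-j}(\Lder i^*i_*L)\cong L\otimes \Lambda^j N_{S/Y}^\vee$ for $j=0,1,2$ yields a convergent spectral sequence
\begin{equation*}
E_2^{p,q}=\Ext^p_S(L\otimes \Lambda^q N_{S/Y}^\vee,L)\Longrightarrow \Ext^{p+q}_Y(i_*L,i_*L).
\end{equation*}
Combining this with Lemma \ref{lem_L}(3), which yields $\Ext^{\ge 1}_S(L,L)=0$ and $\IHom_S(L,L)\cong \OO_S$, the only contribution to $\Ext^1_Y(i_*L,i_*L)$ is the piece $E_2^{0,1}=\Hom_S(L\otimes N_{S/Y}^\vee,L)\cong H^0(S,N_{S/Y})\cong H^0(S,\OO_S(1))^{\oplus 2}\cong \bbC^8$, and this identification is the canonical map sending a first-order deformation of the subscheme $S\subset Y$ to the induced first-order deformation of $i_*L$.

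For the geometric realization, the open locus $U\subset \Gr(4,6)$ of $\bbP^3\subset \bbP^5$ whose intersection with $Y$ has only ADE singularities is smooth of dimension $8$, and the finite \'etale cover $\tilde U\to U$ classifying determinantal representations of the corresponding cubic surface is likewise smooth of dimension $8$. Over $\tilde U$ the construction yields a flat family of sheaves $(i_*L)_{\tilde u}$ on $Y$, and the associated Kodaira--Spencer map $T_{[i_*L]}\tilde U\to \Ext^1_Y(i_*L,i_*L)$ coincides with the map $H^0(S,N_{S/Y})\to \Ext^1_Y(i_*L,i_*L)$ identified above, hence is an isomorphism. Since $\tilde U$ is smooth, every class $u\in \Ext^1_Y(i_*L,i_*L)$ is realized by a deformation that extends to all orders, so its obstruction $u\circ u\in \Ext^2_Y(i_*L,i_*L)$ must vanish.

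The main obstacle will be carrying out the spectral sequence computation carefully: one must check that the term $E_2^{0,2}=\Hom_S(L(-2),L)$ contributes only to $\Ext^2_Y$, that the differentials $d_r$ do not disturb the identification $\Ext^1_Y(i_*L,i_*L)\cong H^0(S,N_{S/Y})$, and that this identification coincides with the Kodaira--Spencer map of the family $\tilde U$.
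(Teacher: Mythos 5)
Your computation of $\Ext^1_Y(i_*L,i_*L)\cong H^0(S,N_{S/Y})\cong\bbC^8$ runs parallel to the paper's (the paper uses the local spectral sequence $E_2^{p,q}=i_*(\IExt^p(L,L)\otimes\Lambda^qN_{S/Y})$ plus local-to-global, you use the adjunction spectral sequence for $\Lder i^*i_*L$; these are equivalent given Lemma \ref{lem_L}(3)). Where you genuinely diverge is the last step, and that is where there is a gap. Your argument needs the Kodaira--Spencer map of the family over $\tilde U$ to be \emph{surjective} onto $\Ext^1_Y(i_*L,i_*L)$, and you get this from the claim that $\tilde U\to U$ is finite \emph{\'etale}, so that $T_{\tilde u}\tilde U\cong T_uU\cong H^0(S,N_{S/Y})$. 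That claim is false over the locus of singular surfaces: the local monodromy of the $72$-sheeted cover of the smooth locus around the discriminant is the Weyl reflection in the vanishing cycle, which acts nontrivially on the $72$ roots of $E_6$ indexing the determinantal representations, so the number of determinantal representations drops for an ADE-singular cubic surface and the cover is necessarily ramified there. At a ramification point $T_{\tilde u}\tilde U\to T_uU$ is not surjective, your identification of the Kodaira--Spencer map breaks down, and you only obtain $u\circ u=0$ for $u$ in a proper subspace of $\Ext^1_Y(i_*L,i_*L)$ (vanishing of a quadratic map on a subspace says nothing about the rest). This matters: the lemma is invoked on all of $\MM_L^\circ$, which contains a divisor's worth of points lying over singular linear sections, precisely so that the two-form of Lemma \ref{lem_symplectic} is defined and antisymmetric there. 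Your argument is complete only over the open locus where $S$ is smooth (and even there it leans on the smoothness of $Z'$ from \cite{LLSvS} and on the unproven identification of the Kodaira--Spencer map, which you flag yourself).

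The paper avoids all of this with a pointwise algebraic argument: once $\Ext^k(i_*L,i_*L)=H^0(S,\Lambda^kN_{S/Y})$ is established, it cites \cite[Lemma 1.3.3]{KM} to identify the Yoneda product with the map induced by the exterior product $N_{S/Y}\otimes N_{S/Y}\to\Lambda^2N_{S/Y}$, so $u\circ u$ is the image of $u\wedge u=0$ for any section $u$ of $N_{S/Y}$. This works uniformly at every point of $\MM_L^\circ$, singular supports included, and requires no global family. If you want to keep your geometric route, you must either restrict the statement to smooth $S$ (which is not enough for its use in the paper) or prove surjectivity of the Kodaira--Spencer map at the ramification points by some other means; the cleanest fix is to replace the final step by the multiplicativity statement of \cite[Lemma 1.3.3]{KM}.
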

\begin{proof}
Recall that $L$ is a rank one sheaf on $S$. 
The unobstructedness is clear when
$S$ is smooth, because $L$ is a line bundle in this case. Then the local $\IExt$'s are
given by $\IExt^k(i_*L,i_*L) = i_*\Lambda^kN_{S/Y}$ (see \cite{KM}, Lemma 1.3.2 for the proof of this).
In the case when $S$ is singular and $L$ is not locally free we can use the same argument
as in Lemma 1.3.2 of \cite{KM} to obtain a spectral sequence 
$E_2^{p,q}=i_*(\IExt^p(L,L)\otimes \Lambda^qN_{S/Y}) \Rightarrow \IExt^{p+q}(i_*L,i_*L)$.
Now we can use the second part of Lemma \ref{lem_L} to conclude that in this case
$\IExt^k(i_*L,i_*L) = i_*\Lambda^kN_{S/Y}$ as well.

We have $N_{S/Y}=\OO_S(1)^{\oplus 2}$ and $H^m(S,\OO_S(k)) = 0$ for $k\ge 0$,
$m\ge 1$ and from the local-to-global spectral sequence we deduce that $\Ext^k(i_*L,i_*L)=H^0(S,\Lambda^kN_{S/Y})$.  
The algebra structure is induced
by exterior product $\Lambda^kN_{S/Y}\otimes \Lambda^mN_{S/Y}\to \Lambda^{k+m}N_{S/Y}$ (see \cite{KM}, Lemma 1.3.3). 
The exterior square of any section of 
$N_{S/Y}$ is zero and unobstructedness follows.
\end{proof}

The sheaf $i_*L$ has Hilbert polynomial $P(i_*L,n)=\frac{3}{2} n^2+\frac{9}{2} n +3$ which
is easy to compute from (\ref{eqn_determinantal}).
Denote by $\MM_L$ the irreducible component of the moduli space of semistable sheaves with this
Hilbert polynomial containing $i_*L$.

Let us denote by $V$ the 6-dimensional vector space, so that $Y\subset \bbP(V)=\bbP^5$.
Denote by $G$ the Grassmannian $\Gr(4,V)$.
Recall from \cite{LLSvS} that we have a closed embedding $\mu\colon Y\hrarr Z$,
and the open subset $Z\backslash \mu(Y)$ corresponds to aCM twisted cubics.
There exists a map $\pi: Z\backslash \mu(Y)\to G$ which sends a twisted cubic
to its linear span in $\bbP^5$.
If we consider linear sections $S = Y\cap \bbP^3$, then
$S$ can have non-ADE singularities, but the codimension in $G$ of such linear subspaces
is at least 4 by Proposition 4.2 and Proposition 4.3 in \cite{LLSvS}.
Denote by $G^\circ\subset G$ the open subset consisting of $U\in G$, such that $Y\cap \bbP(U)$ has
only ADE singularities.
Let $Z^\circ = \pi^{-1}(G^\circ)$ be the corresponding open subset in $Z\backslash \mu(Y)$.
This open subset has complement of codimension 4.

\begin{lemma}\label{lem_MML}
There exists an open subset $\MM_L^\circ\hrarr \MM_L$ isomorphic to $Z^\circ$.
The sheaves on $Y$ corresponding to points of $\MM_L^\circ$ are of the form $i_*L$,
where $L$ gives a determinantal representation for a linear section $S=Y\cap\bbP^3$ with ADE
singularities.
\end{lemma}
\begin{proof}

Denote by $\UU$ the universal subbundle of $\OO_G\otimes V$.
Let $p: \bbP(\UU)\to G$ be the projection and $\HH=\mathcal{H}om_{p}(\OO_{\bbP(\UU)}(-1)^{\oplus 3},\OO_{\bbP(\UU)}^{\oplus 3})$.
We have $\HH\simeq(\UU^\vee)^{\oplus 9}$. We will denote by the same letter $\HH$ the total space
of the bundle $\HH$. By construction, over $\HH\times_G\bbP(\UU)$ we have
the universal morphism
$$
\OO_{\bbP(\UU)}(-1)^{\oplus 3}\stackrel{\AA}{\lrarr}\OO_{\bbP(\UU)}^{\oplus 3}.
$$
Denote by $\HH^\circ$ the open subset in the total space of $\HH$ where $\mathrm{det}(\AA)\neq 0$.
Consider the closed embedding $j: \HH^\circ\times_G\bbP(\UU) \hrarr \HH^\circ\times \bbP(V)$
and the sheaf $\MM = \coker(j_*\AA)$ on $\HH^\circ\times \bbP(V)$.
Let $q: \HH^\circ\times \bbP(V)\to \HH^\circ$ be the projection.
For a point $A\in \HH^\circ$ the restriction $\MM|_{q^{-1}(A)}$ is a sheaf that defines a determinantal representation
of a cubic surface in $\bbP(U)\subset \bbP(V)$. The condition that this surface is contained in $Y$
defines a closed subvariety $\WW\subset \HH^\circ$.

Let $\beta: \WW\times Y\hrarr \HH^\circ\times \bbP(V)$ be the closed embedding. Define $\LL=\MM|_{\WW\times Y}$
and consider the open subset $G^\circ\subset G$ of such subspaces $U\subset V$ that $\bbP(U)\cap Y$ has ADE singularities.
Let $\WW^\circ$ be the preimage of $G^\circ$ under the natural map $\WW\to G$.
The sheaf $\LL$ on $\WW^\circ\times Y$ is flat over $\WW^\circ$ since Hilbert polynomials
of its restrictions to the fibres are the same (see \cite{H}, chapter III, Theorem 9.9). We obtain a morphism
$\psi: \WW^\circ\to \MM_L$. Denote its image by $\MM_L^\circ$.
Consider the fibre $\WW_U$ of the map $\WW^\circ\to G^\circ$ over a point $U\in G$ and the restriction
of $\LL$ to $\WW_U\times Y$. Over a point $w\in \WW_U$ the sheaf $\LL$ defines a determinantal representation
of the surface $Y\cap \bbP(U)$. The general structure of determinantal representations (see \cite{LLSvS} \S 3)
implies that each connected component of the fibre $\WW_U$ is a single $(\mathrm{GL}_3\times \mathrm{GL}_3)/\bbC^*$ orbit
(\cite{LLSvS} Corollary 3.7).
Connected components of $\WW_U$ are in one-to-one correspondence with non-isomorphic determinantal representations
of $Y\cap \bbP(U)$. The restriction of $\LL$ to each connected component of $\WW_U\times Y$ is a constant family of sheaves,
so the map $\psi$ contracts connected components of the fibre $\WW_U$. From the explicit description of $Z^\circ$ given
above, we see that $\MM_L^\circ$ is isomorphic to $Z^\circ$.
The properties stated in the lemma
are clear from construction. We also see that $\WW^\circ$ is a $(\mathrm{GL}_3\times \mathrm{GL}_3)/\bbC^*$-fibre bundle
over $Z^\circ$.
\end{proof}

The sheaves $i_*L$ are not contained in the subcategory $\AA_Y$. 
In order to show that the
closed 2-form described in \cite{KM} is a symplectic form on $\MM_L^\circ$, we are going to project the
sheaves $i_*L$ to $\AA_Y$, and then show that this projection
induces an isomorphism of open subsets of moduli spaces respecting the 2-forms (up to a sign).

\begin{lemma}\label{lem_proj}
The sheaves $i_*L$ are globally generated and lie in the
subcategory $\langle\AA_Y,\OO_Y\rangle$. 
The space of global sections $H^0(Y, i_*L)$ is three-dimensional, and the sheaf $F_L$, defined by the exact triple
\begin{equation}\label{eqn_F}
0\lrarr F_L\lrarr \OO_Y^{\oplus 3}\lrarr i_*L\lrarr 0.
\end{equation}
lies in $\AA_Y$.
\end{lemma}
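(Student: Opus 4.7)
The plan is that all three assertions follow by combining the cohomological vanishings of Lemma \ref{lem_L}(1) with standard cohomology facts for line bundles on the cubic fourfold $Y$.

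For global generation and the computation $H^0(Y,i_*L)=\bbC^3$, I would first observe that the determinantal resolution (\ref{eqn_determinantal}) exhibits $\alpha_*L$ as a quotient of $\OO_{\bbP^3}^{\oplus 3}$, so $L$ is generated by its three global sections on $S$. The sections of $i_*L$ on $Y$ coincide with those of $L$ on $S$, and the induced map $\OO_Y^{\oplus 3}\to i_*L$ has cokernel supported on $S$ and vanishing there, so $i_*L$ is globally generated. The identity $H^0(Y,i_*L)=H^0(S,L)=\bbC^3$ is then Lemma \ref{lem_L}(1).

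For $i_*L\in\langle\AA_Y,\OO_Y\rangle$, I would use that in the given semiorthogonal decomposition this subcategory is the right orthogonal to $\langle\OO_Y(1),\OO_Y(2)\rangle$, so it suffices to verify $H^*(Y,i_*L(-k))=H^*(S,L(-k))=0$ for $k=1,2$, which is exactly what Lemma \ref{lem_L}(1) provides. For the final assertion $F_L\in\AA_Y$, I would twist the defining sequence (\ref{eqn_F}) by $\OO_Y(-k)$ for $k=0,1,2$ and take cohomology. When $k=1,2$ the sheaf $\OO_Y(-k)$ is acyclic on $Y$ (immediate from the structure sequence $0\to\OO_{\bbP^5}(-k-3)\to\OO_{\bbP^5}(-k)\to\OO_Y(-k)\to 0$), and $H^*(S,L(-k))=0$ by Lemma \ref{lem_L}(1), so $F_L(-k)$ is acyclic as well. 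When $k=0$, the cohomology of both $\OO_Y^{\oplus 3}$ and $i_*L$ is concentrated in degree zero, where the evaluation map is the defining isomorphism, so $H^*(Y,F_L)=0$.

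There is no genuine obstacle here: Lemma \ref{lem_L}(1) was assembled precisely to supply exactly the vanishings required by this argument. The only care needed is to identify the subcategory $\langle\AA_Y,\OO_Y\rangle$ correctly via orthogonality and to confirm the acyclicity of $\OO_Y(-1)$ and $\OO_Y(-2)$.
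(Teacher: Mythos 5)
Your proposal is correct and follows essentially the same route as the paper: both arguments rest entirely on the vanishings of Lemma \ref{lem_L}(1) to get right orthogonality to $\OO_Y(1)$, $\OO_Y(2)$ and the identification $H^0(Y,i_*L)=H^0(S,L)=\bbC^3$. The only cosmetic difference is that the paper concludes $F_L\in\AA_Y$ by observing that $F_L$ is (a shift of) the left mutation of $i_*L$ through $\OO_Y$, whereas you unpack that statement into the explicit cohomology check $H^*(Y,F_L(-k))=0$ for $k=0,1,2$ using the acyclicity of $\OO_Y(-1)$, $\OO_Y(-2)$ --- which is precisely what the mutation formalism encodes.
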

\begin{proof}
From Lemma \ref{lem_L} we deduce that $i_*L$ is right orthogonal to $\OO_Y(1)$, $\OO_Y(2)$, so that $i_*L$ lies in $\langle\AA_Y,\OO_Y\rangle$.
It also follows from Lemma \ref{lem_L} that $i_*L$ is globally generated, the global sections are three-dimensional and that
the higher cohomology groups of $L$ vanish. Thus $F_L$ is (up to a shift)
the left mutation of $i_*L$ through the exceptional bundle $\OO_Y$, and in particular it lies in $\AA_Y$.
\end{proof}

\begin{lemma}\label{lem_F}
Consider the exact triple (\ref{eqn_F}) where $i_*L$ is in $\MM_L^\circ$. Then
$F_L$ is a Gieseker-stable rank 3 sheaf contained in $\AA_Y$ with Hilbert polynomial $P(F_L, n) = \frac38 n^4 + \frac94 n^3 + \frac{33}{8} n^2 + \frac94 n$.
\end{lemma}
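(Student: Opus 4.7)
The plan is to establish the three assertions in order of increasing difficulty. For the Hilbert polynomial I will apply additivity to (\ref{eqn_F}), using $P(\OO_Y, n) = \tfrac{n^4}{8}+\tfrac{3n^3}{4}+\tfrac{15n^2}{8}+\tfrac{9n}{4}+1$ (from the resolution $0\to\OO_{\bbP^5}(-3)\to\OO_{\bbP^5}\to\OO_Y\to 0$) together with the formula $P(i_*L,n)=\tfrac{3}{2}n^2+\tfrac{9}{2}n+3$ computed in the previous subsection; the constant term of the resulting $P(F_L,n)$ vanishes, consistent with $\chi(F_L)=0$ since $F_L$ lies in the Calabi--Yau-$2$ subcategory $\AA_Y$. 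The rank is $3$ because $i_*L$ is torsion, and the containment $F_L\in\AA_Y$ is Lemma \ref{lem_proj}.

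For Gieseker stability I proceed by contradiction. Since $i_*L$ is supported in codimension $2$, $c_1(F_L)=0$ and $\mu(F_L)=0$. Assume $F'\subsetneq F_L$ is a proper saturated subsheaf of rank $r\in\{1,2\}$ with $p(F')\ge p(F_L)$, so in particular $\mu(F')\ge 0$. The first reduction will use the inclusion $F_L\hookrightarrow \OO_Y^{\oplus 3}$: starting from $\det F'$ (directly for $r=1$ via the reflexive hull of $F'$, through $\Lambda^2 F'$ for $r=2$) I will construct an inclusion $\OO_Y(D)\hookrightarrow \OO_Y^{\oplus 3}$, equivalently three elements of $H^0(\OO_Y(-D))$; this will force $-D\cdot H^3\ge 0$, and combined with $\mu(F')\ge 0$ will yield $D=0$ (using that $\Pic(Y)$ is torsion-free since $Y$ is simply connected), so the determinantal line bundle is trivial.

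It remains to rule out the equality cases. For $r=2$, $F_L/F'$ is an ideal sheaf $I_Z$ and $F'=\ker\phi$ for a non-zero $\phi\in\Hom(F_L,\OO_Y)$; a diagram chase in (\ref{eqn_F}) identifies $\mathrm{coker}(\phi)=\OO_Z$ with the cokernel of a map $\OO_Y^{\oplus 2}\to i_*L$ given by two linearly independent global sections. Since $L$ is a rank-one torsion-free sheaf on the irreducible surface $S$ globally generated by three sections, any two linearly independent sections generate $i_*L$ generically on $S$, so $\dim Z\le 1$; comparing leading terms then yields $p(F')<p(F_L)$, a contradiction. For the rank-$1$ equality, $F'\cong \mathcal{I}_W$ for a codimension-$\ge 2$ subscheme $W\subset Y$; if $\dim W=2$ the Hilbert polynomial comparison forces $W$ to have degree $1$, i.e., $W$ is a plane, excluded by hypothesis; if $\dim W\le 1$ (codimension $\ge 3$), the vanishing $\Ext^1(\OO_W,F_L)=0$ follows from the projective dimension bound $\mathrm{pd}(F_L)\le 1$ (itself a consequence of (\ref{eqn_F})) combined with the codimension of $W$, so $\Hom(\mathcal{I}_W,F_L)=\Ext^1(\OO_W,F_L)=0$ and no such embedding exists.

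The hard part will be the unified treatment of the equality cases: verifying the dimension bound $\dim Z\le 1$ in the rank-two case requires some care at the singular points of $S$ where $L$ fails to be a line bundle, and the $\Ext^1$ vanishing in the rank-one case depends on the specific projective-dimension-one structure of $F_L$ coming from the determinantal presentation (\ref{eqn_determinantal}) of $L$ together with the no-plane hypothesis on $Y$.
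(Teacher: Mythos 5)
Your plan is correct and proves the lemma, but the stability argument takes a genuinely different route from the paper's. The paper saturates a candidate destabilizer $\GG\subset F_L$ inside the ambient $\OO_Y^{\oplus 3}$, shows the saturation is $\OO_Y^{\oplus m}$ with $m\in\{1,2\}$ once $c_1=0$, and observes that $\HH=\OO_Y^{\oplus m}/\GG$ is a \emph{non-zero} subsheaf of $i_*L$ (non-zero because $\Hom(\OO_Y,F_L)=0$); since $L$ is torsion-free of rank one on the irreducible surface $S$, $P(\HH,n)$ has the same degree-two leading coefficient $\tfrac32$ as $P(i_*L,n)$, and the single inequality $\tfrac1m P(\HH,n)>\tfrac13 P(i_*L,n)$ for $m\le 2$ disposes of both ranks at once. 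You instead analyse the quotient $F_L/F'$ (rank $2$) and $F'$ itself (rank $1$) as ideal sheaves, which works but costs more: you need $\Hom(F_L,\OO_Y)=\bbC^3$ with every map factoring through $\OO_Y^{\oplus 3}$ (true, since $\Ext^1(i_*L,\OO_Y)=0$ by Serre duality); you invoke the no-plane hypothesis to exclude $\deg W=1$ in the rank-one case, which the paper's argument avoids because in its setup $\OO_W$ injects into $i_*L$ and so automatically has leading coefficient $\tfrac32$ rather than $\tfrac12$; and your $\Ext^1(\OO_W,F_L)=0$ step for $\mathrm{codim}\,W\ge 3$ rests on $\mathrm{pd}(F_L)\le 1$, i.e.\ on $L$ being maximal Cohen--Macaulay on $S$, which does follow from the determinantal resolution (\ref{eqn_determinantal}) but must be said. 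What your approach buys is an explicit picture of would-be destabilizers (pencils of sections of $L$ and their base loci, planes in $Y$); what the paper's buys is brevity, uniformity in the rank, and independence from the no-plane assumption at this point. The Hilbert polynomial, rank, and $\AA_Y$-containment parts of your plan coincide with the paper's.
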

\begin{proof}
By Lemma \ref{lem_MML} the sheaf $i_*L$ is right-orthogonal to $\OO_Y(2)$ and $\OO_Y(1)$.
The sheaf $F_L$ is a shift of the left mutation of $i_*L$ through $\OO_Y$, hence it is
contained if $\AA_Y$. The Hilbert polynomial can be computed using the Hirzebruch-Riemann-Roch formula.
It remains to check the stability of $F_L$.

The sheaf $F_L$ is a subsheaf of $\OO_Y^{\oplus 3}$, hence it has no torsion. In order
to check the stability we consider all proper saturated subsheaves $\GG\subset F_L$.
We have to make sure that $p(\GG, n)< p(F_L, n)$ where $p$ is the reduced Hilbert polynomial
(see \cite{HL} for all the relevant definitions). We use the convention
that the inequalities between polynomials are supposed to hold for $n \gg 0$.

We denote by $P$ the non-reduced Hilbert polynomial. We have $P(\OO_Y, n) = a_0 n^4 + a_1 n^3 +\ldots + a_4$,
with the leading coefficient $a_0 = \frac{3}{4!}$. From the exact sequence
(\ref{eqn_F}) we see that $P(F_L, n) = 3P(\OO_Y,n) - P(i_*L,n)$.
Since $i_*L$ has two-dimensional support, the degree of $P(i_*L,n)$ is two, and hence
the leading coefficient of $P(F_L, n)$ equals $3a_0$. So we have
\begin{equation}\label{pFF_C}
p(F_L, n) = p(\OO_Y, n) - \frac{1}{3a_0}P(i_*L, n).
\end{equation}

Let $\tilde{\GG}$ be the saturation of $\GG$ inside $\OO_Y^{\oplus 3}$.
Then $\tilde{\GG}$ is a reflexive sheaf and we have a diagram:
$$
\begin{tikzcd}[]
0\rar & \GG\dar\rar   & \tilde{\GG}\dar\rar   & \HH\dar\rar  & 0\\
0\rar & F_L\rar       & \OO_Y^{\oplus 3}\rar     & i_*L\rar & 0
\end{tikzcd}
$$
In this diagram $\HH$ is a torsion sheaf which injects into $i_*L$ because $F_L/\GG$ is torsion-free.
Note that $\OO_Y^{\oplus 3}$ is Mumford-polystable, so $c_1(\GG)\le c_1(\tilde{\GG})\le 0$. If $c_1(\GG)< 0$
then $\GG$ is not destabilizing in $F_L$ because $c_1(F_L) = 0$.

Next we consider the case $c_1(\GG)= c_1(\tilde{\GG})= 0$. In this case $\tilde{\GG}=\OO_Y^{\oplus m}$ where $m=1$ or $m=2$.
This is clear if $\rk{\tilde{\GG}}=1$ since a reflexive sheaf of rank one is a line bundle.
If $\rk{\tilde{\GG}}=2$ we can consider the quotient $\OO_Y^{\oplus 3}/\tilde{\GG}$ which is torsion-free,
globally generated, of rank one and has zero first Chern class. It follows that the quotient is 
isomorphic to $\OO_Y$ and then $\tilde{\GG}=\OO_Y^{\oplus 2}$.

We have an exact triple $0\lrarr\GG\lrarr\OO_Y^{\oplus m}\lrarr\HH\lrarr 0$ with $m$ equal to $1$ or $2$.
We see that $p(\GG,n) = p(\OO_Y,n) - \frac{1}{ma_0}P(\HH, n)$. Note that $\HH$ is a non-zero sheaf
which injects into $i_*L$, and the sheaf $L$ on the surface $S$ is torsion-free of rank one. Hence the
leading coefficient of $P(\HH, n)$ is the same as for $P(i_*L, n)$ and this implies
$\frac{1}{ma_0}P(\HH, n)> \frac{1}{3a_0}P(i_*L, n)$. From this and (\ref{pFF_C}) we conclude that
$p(\GG,n)<p(F_L,n)$, hence $\GG$ is not destabilizing. This completes the proof.
\end{proof}

Let us consider the moduli space of rank 3 semistable sheaves on $Y$ with Hilbert polynomial $P(F_L,n)$.
Denote by $\MM_F$ its irreducible component which contains the sheaves $F_L$ from (\ref{eqn_F}).

\begin{lemma}\label{lem_mutation}
The left mutation of $i_*L$ through $\OO_Y$ gives an open embedding $\MM_L^\circ\to \MM_F$.
\end{lemma}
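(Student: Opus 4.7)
The plan is to construct the morphism $\MM_L^\circ \to \MM_F$ as a relative version of the left mutation through $\OO_Y$ and then to verify that it is injective on closed points and an isomorphism on tangent spaces, from which we conclude that it is an open embedding.

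First I would construct the morphism using the universal family. Let $\mathcal{I}$ be the universal sheaf on $Y \times \MM_L^\circ$ and $p\colon Y \times \MM_L^\circ \to \MM_L^\circ$ the projection. By Lemma \ref{lem_L}(1) every fibre $i_*L$ satisfies $h^0(Y, i_*L) = 3$ with vanishing higher cohomology, so cohomology and base change produce a rank-$3$ vector bundle $p_*\mathcal{I}$ on $\MM_L^\circ$ whose formation commutes with arbitrary base change. The evaluation $p^* p_* \mathcal{I} \to \mathcal{I}$ is fibrewise surjective by Lemma \ref{lem_proj}, hence surjective; its kernel $\mathcal{F}$ is flat over $\MM_L^\circ$ with fibres equal to the sheaves $F_L$ from (\ref{eqn_F}), each Gieseker-stable with the prescribed Hilbert polynomial by Lemma \ref{lem_F}. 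This family defines the morphism $\Phi\colon \MM_L^\circ \to \MM_F$.

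To prove injectivity on closed points and an isomorphism on tangent spaces, I would invoke the fact that the left mutation $L_{\OO_Y}$ through the exceptional object $\OO_Y$ is a fully faithful functor on $\Db(Y)$. Pointwise $\Phi$ sends $i_*L$ to $F_L$, and the defining triangle (\ref{eqn_F}) identifies $F_L[1]$ with $L_{\OO_Y}(i_*L)$. Fully faithfulness immediately gives that $F_L \cong F_{L'}$ forces $i_*L \cong i_*L'$, hence $\Phi$ is injective on closed points. It also yields canonical isomorphisms $\Ext^k(F_L, F_L) \cong \Ext^k(i_*L, i_*L)$ for every $k$; by naturality of the cone construction applied to the universal evaluation map, the differential of $\Phi$ at $i_*L$ is precisely the degree-$1$ incarnation of this isomorphism.

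Finally, Lemma \ref{lem-unobs} makes $\MM_L^\circ$ smooth at every point, and transport across the mutation (the Yoneda product on $\Ext^\sdot(F_L,F_L)$ matches that on $\Ext^\sdot(i_*L,i_*L)$) gives smoothness of $\MM_F$ at every point of the image with the same dimension. An injective morphism between smooth varieties of equal dimension whose differential is everywhere an isomorphism is an open immersion, so $\Phi$ is an open embedding. The main obstacle is the bookkeeping for the family version of the mutation, and in particular the verification that $d\Phi$ literally equals the mutation-induced map on $\Ext^1$; this is standard but requires carefully matching deformations of $i_*L$ with deformations of the triangle $\OO_Y^{\oplus 3} \to i_*L \to F_L[1]$. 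A secondary concern is keeping the image inside the chosen irreducible component $\MM_F$, which follows because $\MM_L^\circ$ is irreducible and the construction is continuous.
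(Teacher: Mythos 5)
Your overall architecture (relative mutation via the universal family, then injectivity on points plus isomorphism on tangent spaces) matches the paper's, and the construction of the morphism $\Phi$ in your first paragraph is exactly what the paper does. However, the justification you give for both injectivity and the tangent space isomorphism contains a genuine error: the left mutation functor $L_{\OO_Y}$ is \emph{not} fully faithful on $\Db(Y)$ --- it kills $\OO_Y$ (indeed $L_{\OO_Y}(\OO_Y)=0$), and the standard equivalence ${}^\perp\OO_Y \to \OO_Y^\perp$ does not apply here because $i_*L$ does not lie in ${}^\perp\OO_Y$ (one computes $\chi(i_*L,\OO_Y)\ne 0$, e.g.\ $\Ext^2(i_*L,\OO_Y)\cong H^2(S,L(-3))^*\ne 0$). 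The failure is visible precisely on the objects at hand: $\Ext^2(i_*L,i_*L)\cong H^0(S,\Lambda^2 N_{S/Y})=H^0(S,\OO_S(2))$ has large dimension, while $\Ext^2(F_L,F_L)\cong\Hom(F_L,F_L)^*=\bbC$ because $F_L$ lies in the $2$-Calabi--Yau category $\AA_Y$. So your parenthetical claim that the Yoneda algebras $\Ext^\sdot(i_*L,i_*L)$ and $\Ext^\sdot(F_L,F_L)$ match is also false.

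What is actually true, and what the paper proves, is that the mutation induces an isomorphism only on $\Hom$ and $\Ext^1$, and this follows not from full faithfulness but from the specific vanishings $\Ext^{\le 1}(\OO_Y,F_L)=0$ and $\Ext^{\le 1}(i_*L,\OO_Y)=\Ext^{\ge 3}(\OO_Y,i_*L(-3))^*=0$, which give the unique filling-in of the morphism of triangles (\ref{eqn_mutation}) in both directions. You would need to replace your appeal to full faithfulness by this diagram chase. For injectivity on closed points the paper uses a different and self-contained argument: by Grothendieck--Verdier duality $\IExt^2(i_*L,\OO_Y)=i_*L^\vee(2)$, hence from (\ref{eqn_F}) one gets $\IExt^1(F_L,\OO_Y)=i_*L^\vee(2)$, so $L$ (and its support $S$) is reconstructed functorially from $F_L$. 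Your injectivity could alternatively be salvaged from the degree-zero part of the corrected isomorphism $\Hom(i_*L,i_*L')\cong\Hom(F_L,F_{L'})$, but as written the proof rests on a false statement and needs repair.
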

\begin{proof}
Recall from the proof of Lemma \ref{lem_MML} that $\MM_L^\circ$ was defined as the image of a map $\WW^\circ\to \MM_L$
where $\WW^\circ$ was a fibre bundle over $Z^\circ$. On $X = \WW^\circ\times Y$ a universal
sheaf $\LL$ flat over $\WW^\circ$ was constructed. Denote by $\pi\colon X\to \WW^\circ$ the projection.

By definition of $\MM_L^\circ$ and from Lemma \ref{lem_L} it follows that $\pi_*\LL$
is a rank 3 vector bundle and we have an exact sequence $0\to\FF_\LL\to\pi^*\pi_*\LL\to \LL\to 0$.
The family of sheaves $\FF_\LL$ defines a map $\WW^\circ\to \MM_F$ which factors
through $\MM_L^\circ\to \MM_F$. We will show
that the differential of the latter map is an isomorphism.

For a sheaf $i_*L$ corresponding to a point of $\MM_L^\circ$ and any tangent vector
$u\in\Ext^1(i_*L,i_*L)$ we have unique morphism of triangles
\begin{equation}\label{eqn_mutation}
\begin{tikzcd}[]
F_L \dar{u'}\rar & \OO_Y^{\oplus 3} \dar{0}\rar & i_*L \dar{u}\rar & F_L[1] \dar{u'[1]} \\
F_L[1] \rar      & \OO_Y^{\oplus 3}[1] \rar     & i_*L[1]\rar      & F_L[2]
\end{tikzcd}
\end{equation}
Uniqueness of $u'$ follows from $\Ext^1(\OO_Y,F_L) = 0$. Moreover, $u$ is uniquely
determined by $u'$ because $\Ext^1(i_*L,\OO_Y) = \Ext^3(\OO_Y,i_*L(-3))^* = 0$. This
shows that the mutation induces an isomorphism of $\Ext^1(i_*L,i_*L)$ and $\Ext^1(F_L,F_L)$.

Finally, let us prove that the map $\MM_L^\circ\to \MM_F$ is injective. It follows from 
Grothendieck-Verdier duality that $\IExt^2(i_*L,\OO_Y)=i_*L^\vee(2)$.
Then from (\ref{eqn_F}) we see that $\IExt^1(F_L,\OO_Y)=i_*L^\vee(2)$ and hence 
$L$ can be reconstructed from $F_L$.
\end{proof}

\subsection{The symplectic form and Lagrangian subvarieties}

Let us recall the description of the two-form on the moduli spaces of sheaves on $Y$ from \cite{KM}.

Given a coherent sheaf $\FF$ on $Y$ we can define its Atiyah class $\Ati{\FF}\in\Ext^1(\FF,\FF\otimes\Omega_Y)$.
The Atiyah class is functorial, meaning that for any morphism of sheaves $\alpha\colon \FF\to \GG$
we have $\Ati{\GG}\circ\alpha=(\alpha\otimes \mathrm{id})\circ\Ati{\FF}$.

We define a bilinear form $\sigma$ on the vector space $\Ext^1(\FF,\FF)$. Given two elements $u,v\in \Ext^1(\FF,\FF)$
we consider the composition $\Ati{\FF}\circ u\circ v\in \Ext^3(\FF,\FF\otimes\Omega_Y)$ and apply the
trace map $\Tr\colon\Ext^3(\FF,\FF\otimes\Omega_Y)\to \Ext^3(\OO_Y,\Omega_Y)= H^{1,3}(Y)= \bbC$ to it:
\begin{equation}\label{eqn_sigma}
\sigma(u,v)=\Tr(\Ati{\FF}\circ u\circ v).
\end{equation}

Note that when the Kuranishi space of $\FF$ is smooth then for any $u\in \Ext^1(\FF,\FF)$ we have
$u\circ u=0$ and then $\sigma(u,u)=0$. In this case $\sigma$ is antisymmetric. Hence the formula
(\ref{eqn_sigma}) defines a two-form at smooth points of moduli spaces of sheaves on $Y$. This form
is closed by \cite{KM}, Theorem 2.2. 

\begin{lemma}\label{lem_symplectic}
The formula (\ref{eqn_sigma}) defines a symplectic form on $\MM_L^\circ$ which
coincides up to a non-zero constant with the restriction of the symplectic form on $Z$ under the isomorphism $\MM_L^\circ\simeq Z^\circ$.
\end{lemma}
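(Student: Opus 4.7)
The strategy has three parts: verify that $\sigma$ defines a closed holomorphic 2-form on the smooth variety $\MM_L^\circ$; prove that it is non-degenerate by comparing with the Kuznetsov--Markushevich form on $\MM_F$ via the mutation of Lemma \ref{lem_mutation}; and finally use the uniqueness of the holomorphic 2-form on the IHS manifold $Z$ to identify $\sigma$ with a non-zero scalar multiple of the restriction of $\omega_Z$. Smoothness of $\MM_L^\circ$ at the sheaves $i_*L$ follows from Lemma \ref{lem-unobs}, and the same unobstructedness implies $u \circ u = 0$ for every $u \in \Ext^1(i_*L, i_*L)$, so that the bilinear form $\sigma$ is antisymmetric. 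Closedness is exactly \cite{KM}, Theorem 2.2.

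For non-degeneracy, recall from Lemma \ref{lem_F} that $F_L \in \AA_Y$, and that $\AA_Y$ is a 2-Calabi--Yau category. Hence Serre duality yields a non-degenerate pairing $\Ext^1(F_L, F_L) \times \Ext^1(F_L, F_L) \to \Ext^2(F_L, F_L) \cong \bbC$, and by \cite{KM} this pairing coincides (up to a non-zero constant) with the form $\sigma_{F_L}$ on $\MM_F$ defined by the recipe (\ref{eqn_sigma}); in particular $\sigma_{F_L}$ is non-degenerate at $F_L$. It remains to show that the open embedding $\iota \colon \MM_L^\circ \hookrightarrow \MM_F$ from Lemma \ref{lem_mutation} satisfies $\iota^* \sigma_{F_L} = \pm \sigma$. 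The argument is based on the functoriality of the Atiyah class applied to the triangle $F_L \to \OO_Y^{\oplus 3} \to i_*L \to F_L[1]$ together with the vanishing $\Ati{\OO_Y^{\oplus 3}} = 0$: these force $\Ati{i_*L}$ to be identified, up to a sign and a shift, with $\Ati{F_L}$ via the connecting morphism. Plugging the resulting relation into the definition (\ref{eqn_sigma}) with $u'$, $v'$ as in (\ref{eqn_mutation}), and using the cyclic and functorial properties of the trace, yields the desired identity of 2-forms. This step is the main technical obstacle and requires a careful diagram chase in $\Db(Y)$.

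Finally, under the isomorphism $\MM_L^\circ \simeq Z^\circ$ of Lemma \ref{lem_MML}, the form $\sigma$ transports to a non-degenerate closed holomorphic 2-form on $Z^\circ$. Since the complement of $Z^\circ$ in $Z$ has codimension at least $4$ and $Z$ is smooth, $\sigma$ extends by Hartogs to a global holomorphic 2-form $\tilde\sigma$ on $Z$. The IHS condition gives $\dim_{\bbC} H^0(Z, \Omega_Z^2) = 1$, spanned by $\omega_Z$, so $\tilde\sigma = c\, \omega_Z$ for some $c \in \bbC$; non-degeneracy of $\sigma$ forces $c \neq 0$, which completes the comparison up to a non-zero constant as claimed.
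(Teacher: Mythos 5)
Your proposal follows the same route as the paper's proof: unobstructedness (Lemma \ref{lem-unobs}) to define the two-form, the comparison $\sigma(u,v)=-\sigma(u',v')$ under the mutation of Lemma \ref{lem_mutation} via functoriality of Atiyah classes and additivity of traces, non-degeneracy on $\MM_F$ from the fact that $F_L\in\AA_Y$ (the paper cites \cite{KM}, Theorem 4.3, which is exactly the Serre-duality argument you sketch), and finally uniqueness of the two-form on the IHS manifold $Z$ using the codimension-four complement. The argument is correct and essentially identical to the paper's.
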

\begin{proof}
By Lemma \ref{lem-unobs} the sheaves $i_*L$ from $\MM_L^\circ$ have unobstructed deformations,
so that (\ref{eqn_sigma}) indeed defines a two-form. 

Recall from Lemma \ref{lem_mutation} that we have an open embedding $\MM_L^\circ\hookrightarrow \MM_F$.
Let us show that this embedding respects (up to a sign) symplectic forms on $\MM_L$ and $\MM_F$ given by (\ref{eqn_sigma}).
Note that by functoriality of Atiyah classes the following diagram gives a morphism
of triangles:
$$
\begin{tikzcd}[]
F_L \dar{\Ati{F_L}}\rar & \OO_Y^{\oplus 3} \dar{\Ati{\OO_Y^{\oplus 3}} = 0}\rar &  i_*L \dar{\Ati{i_*L}}\rar & F_L[1] \dar{\Ati{F_L}[1]} \\
F_L\otimes\Omega_Y[1] \rar & \Omega_Y^{\oplus 3}[1] \rar & i_*L\otimes\Omega_Y[1]\rar & F_L\otimes\Omega_Y[2]
\end{tikzcd}
$$
For any pair of tangent vectors $u,v\in \Ext^1(i_*L,i_*L)$ we have two morphisms of triangles as in (\ref{eqn_mutation}).
If we compose these two morphisms of triangles with the one induced by Atiyah classes then we get the following:
$$
\begin{tikzcd}[]
F_L \dar{\Ati{F_L}\circ u'\circ v'}\rar & \OO_Y^{\oplus 3} \dar{0}\rar &
 i_*L \dar{\Ati{i_*L}\circ u\circ v}\rar & F_L[1] \dar{\Ati{F_L}\circ u'\circ v'[1]} \\
F_L\otimes\Omega_Y[3] \rar & \Omega_Y^{\oplus 3}[3] \rar & i_*L\otimes\Omega_Y[3]\rar & F_L\otimes\Omega_Y[4]
\end{tikzcd}
$$
This diagram is a morphism of triangles and the additivity of traces implies that $\sigma(u,v)=-\sigma(u',v')$.

By Theorem 4.3 from \cite{KM} the form $\sigma$ on $\MM_F$ is symplectic, because
the sheaves $F_L$ are contained in $\AA_Y$. Hence $\sigma$ is a symplectic form
on $\MM_L^\circ$. But $\MM_L^\circ$ is embedded into $Z$ as an open subset with
complement of codimension four. This implies that the symplectic form on
$\MM_L^\circ$ is unique up to a constant, because $Z$ is IHS. This completes the
proof.
\end{proof}

\begin{thm}\label{thm_MMF}
The component $\MM_F$ of the moduli space of Gieseker stable sheaves with Hilbert polynomial $P(F_L, n)$ 
is birational to the IHS manifold $Z$. Under this birational equivalence 
the symplectic form on $Z$ defined in \cite{LLSvS} corresponds
to the Kuznetsov-Markushevich form on $\MM_F$.
\end{thm}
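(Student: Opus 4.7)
The argument is essentially a concatenation of the three preceding lemmas, so the plan is to assemble them and then verify that no global issue remains. First I would line up the chain
\begin{equation*}
Z \;\supset\; Z^\circ \;\xrightarrow{\sim}\; \MM_L^\circ \;\hookrightarrow\; \MM_F
\end{equation*}
obtained by combining Lemma \ref{lem_MML}, which produces the isomorphism $Z^\circ \simeq \MM_L^\circ$ with the complement of $Z^\circ$ in $Z$ of codimension four, with Lemma \ref{lem_mutation}, which says that the left mutation $i_*L \mapsto F_L$ gives an open embedding $\MM_L^\circ \hookrightarrow \MM_F$. Since $\MM_F$ is by definition an irreducible component of its moduli space, and its image contains an open subset isomorphic to the dense open $Z^\circ$ of the irreducible 8-fold $Z$, this common dense open exhibits the birational equivalence $Z \dashrightarrow \MM_F$. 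In particular the dimensions match automatically, so no separate dimension count is needed.

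For the statement about the symplectic forms, Lemma \ref{lem_symplectic} has already done the work on the middle term. The Kuznetsov–Markushevich formula \eqref{eqn_sigma} defines a holomorphic 2-form on $\MM_L^\circ$ which, by that lemma, coincides up to a non-zero constant with the restriction of the LLSvS symplectic form under $Z^\circ \simeq \MM_L^\circ$. The proof of the same lemma, via functoriality of the Atiyah class and additivity of traces applied to the morphism of triangles \eqref{eqn_mutation}, also shows that the mutation open embedding $\MM_L^\circ \hookrightarrow \MM_F$ pulls the Kuznetsov–Markushevich form on $\MM_F$ back to minus the Kuznetsov–Markushevich form on $\MM_L^\circ$. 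Concatenating these comparisons, on the common open subset $Z^\circ \simeq \MM_L^\circ \hookrightarrow \MM_F$ the LLSvS form and the Kuznetsov–Markushevich form agree up to a non-zero scalar.

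There is essentially no obstacle left: because holomorphic 2-forms on an irreducible smooth variety are determined by their restriction to any non-empty open subset, the proportionality on $Z^\circ$ propagates to the statement under the birational map $Z \dashrightarrow \MM_F$ globally. Thus the theorem is a direct consequence of Lemmas \ref{lem_MML}, \ref{lem_mutation} and \ref{lem_symplectic}, with all the analytic content already absorbed into those lemmas.
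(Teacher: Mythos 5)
Your proposal is correct and follows exactly the paper's own route: the proof in the paper is literally the concatenation of Lemmas \ref{lem_MML}, \ref{lem_F}, \ref{lem_mutation} and \ref{lem_symplectic}, which is what you have spelled out. The only cosmetic omission is that you do not explicitly invoke Lemma \ref{lem_F}, which supplies the Gieseker stability of $F_L$ needed for $\MM_F$ to be a component of the moduli space of stable sheaves as asserted in the statement.
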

\begin{proof}
Follows from Lemmas \ref{lem_MML}, \ref{lem_F}, \ref{lem_mutation}, \ref{lem_symplectic}.
\end{proof}

Now we explain how hyperplane sections of $Y$ give rise to Lagrangian
subvarieties of $Z$.
Let $H\subset \bbP^5$ be a generic hyperplane, so that $Y_H=Y\cap H$ is a smooth
cubic threefold. Twisted cubics contained in $Z$ form a subvariety $M_3(Y)_H\subset M_3(Y)$
whose image in $Z$ we denote by $Z_H$. Its open subset $Z_H^\circ = Z_H\cap Z^\circ$
consists of sheaves $i_*L$ whose support is contained in $H$.

\begin{prop}\label{prop_Lagr}
$Z_H$ is a Lagrangian subvariety of $Z$.
\end{prop}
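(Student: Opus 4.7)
My plan is to verify both defining conditions of a Lagrangian subvariety: the dimension count $\dim Z_H = 4 = \frac{1}{2}\dim Z$, and the vanishing of the restriction of the symplectic form $\sigma$. Since $Z\setminus Z^\circ$ has codimension four in $Z$ by Lemma \ref{lem_MML}, the intersection $Z_H^\circ = Z_H\cap Z^\circ$ is dense in $Z_H$, so by holomorphicity of $\sigma$ it will suffice to prove $\sigma|_{Z_H^\circ} \equiv 0$.

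For the dimension, every twisted cubic contained in $Y_H$ spans a unique $\bbP^3\subset H\cong \bbP^4$, so the variety $M_3(Y)_H$ fibres over $\Gr(4,5)$ (of dimension four) with generic fibre the two-dimensional union of linear systems of aCM twisted cubics on the cubic surface $S=Y\cap \bbP^3$. Hence $\dim M_3(Y)_H = 6$, and since the $\bbP^2$-fibration $a$ of (\ref{eqn_contractions}) contracts each linear system to a single point of $Z$, we obtain $\dim Z_H = 4$.

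To prove $\sigma|_{Z_H^\circ} \equiv 0$, I will compute $\sigma$ on $Z^\circ \simeq \MM_L^\circ$ via the Kuznetsov--Markushevich formula $\sigma(u,v) = \Tr(\Ati{i_*L}\circ u\circ v)$ from Lemma \ref{lem_symplectic}. At $[i_*L]\in Z_H^\circ$, the tangent space $\Ext^1_Y(i_*L,i_*L)$ is identified (as in the proof of Lemma \ref{lem-unobs}) with $H^0(S,N_{S/Y})$, where $N_{S/Y} \simeq \OO_S(1)^{\oplus 2}$. A tangent vector is tangent to $Z_H^\circ$ exactly when the corresponding infinitesimal deformation keeps the support inside $Y_H$; equivalently, it comes from a first-order deformation of $i'_*L$ on $Y_H$ (where $i'\colon S\hookrightarrow Y_H$) pushed forward along $j\colon Y_H\hookrightarrow Y$. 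Therefore the tangent space to $Z_H^\circ$ at $[i_*L]$ is the image of $H^0(S,N_{S/Y_H})$ inside $H^0(S,N_{S/Y})$ under the inclusion of normal bundles $N_{S/Y_H} \hookrightarrow N_{S/Y}$. Note that $N_{S/Y_H} \simeq \OO_S(1)$ is a line bundle, hence a rank-one subbundle of the rank-two bundle $N_{S/Y}$.

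The proof then closes by the following observation, coming from the proof of Lemma \ref{lem-unobs} (cf.\ \cite[Lemma 1.3.3]{KM}): under the identifications $\Ext^k_Y(i_*L,i_*L) = H^0(S,\Lambda^k N_{S/Y})$, Yoneda composition corresponds to the wedge product on the exterior algebra of $N_{S/Y}$. For any two tangent vectors $u,v$ to $Z_H^\circ$, which are sections of the rank-one subbundle $N_{S/Y_H}\subset N_{S/Y}$, the wedge $u\wedge v$ vanishes identically as a section of $\Lambda^2 N_{S/Y}$; hence $u\circ v = 0$ in $\Ext^2_Y(i_*L, i_*L)$ and $\sigma(u,v) = 0$. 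The main technical step is the identification of the tangent space to $Z_H^\circ$ with the pushforward image from $Y_H$; once this is granted, the vanishing of $\sigma$ reduces to the triviality of the second exterior power of a line bundle.
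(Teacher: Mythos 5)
Your proposal is correct and follows essentially the same route as the paper: the dimension count via the $\bbP^4$ of linear subspaces $\bbP^3\subset H$, the identification $\Ext^1(i_*L,i_*L)=H^0(S,N_{S/Y})$ with tangent subspace $H^0(S,N_{S/Y_H})$, and the vanishing of the Yoneda square because it is the wedge product on the rank-one subbundle $N_{S/Y_H}\subset N_{S/Y}$. The only cosmetic difference is that you spell out the fibration argument for $\dim Z_H=4$ and the density of $Z_H^\circ$ slightly more explicitly than the paper does.
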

\begin{proof}
It is clear that $Z_H$ has dimension four since the Grassmannian of three-dimensional
subspaces in $H$ is $\bbP^4$.
Consider a sheaf $i_*L$ whose support $S$ is smooth and contained in $Y_H$.
Since $L$ is a locally free sheaf on $S$ we have $\IExt^k(i_*L,i_*L)=i_*\Lambda^kN_{S/Y}$
(see for example \cite{KM}, Lemma 1.3.2). The higher cohomologies of the sheaves $\IExt^k(i_*L,i_*L)$
vanish for $k\ge 0$, because $N_{S/Y}=\OO_S(1)^{\oplus 2}$ and the sheaves $\OO_S(k)$
have no higher cohomologies for $k\ge 0$. Hence from the local-to-global spectral
sequence we find that $T_{i_*L}\MM_L=\Ext^1(i_*L, i_*L)=H^0(S,N_{S/Y})$. Moreover, the Yoneda
multiplication on $\Ext$'s is given by the map $H^0(S,N_{S/Y})\times H^0(S,N_{S/Y})\to H^0(S,\Lambda^2N_{S/Y})$
which is induced from the exterior product morphism $N_{S/Y}\otimes N_{S/Y}\to \Lambda^2N_{S/Y}$
(see \cite{KM}, Lemma 1.3.3).
Now, the tangent space to $Z_H$ at $i_*L$ is $H^0(S,N_{S/Y_H})$. But the exterior product
$N_{S/Y_H}\otimes N_{S/Y_H}\to \Lambda^2N_{S/Y_H}=0$ vanishes because $N_{S/Y_H}$ is of rank one.
So the Yoneda product vanishes on the corresponding subspace of $\Ext^1(i_*L, i_*L)$
and from the definition of the symplectic form (\ref{eqn_sigma}) we conclude that the
tangent subspace to $Z_H$ is Lagrangian. This holds on an open subset of
$Z_H$, so $Z_H$ is a Lagrangian subvariety. 
\end{proof}

In the next section we give a description of the subvarieties $Z_H$ in
terms of intermediate Jacobians of the threefolds $Y_H$.

\section{Twisted cubics on a cubic threefold}


In this section we assume that the cubic fourfold $Y$ and its hyperplane section $Y_H$ are
chosen generically, so that $Y_H$ is smooth and all the surfaces obtained by intersecting $Y_H$ with
three-dimensional subspaces have at worst ADE singularities. For general $Y$ and $H$ this indeed
will be the case, because for a general cubic threefold in $\bbP^4$ its hyperplane sections
have only ADE singularities. One can see this from dimension count by considering the codimensions
of loci of cubic surfaces with different singularity types (see for example \cite{LLSvS}, sections 2.2 and 2.3).

The cubic threefold $Y_H$ has an intermediate Jacobian $\IJac{Y_H}$ which is a principally polarized
abelian variety. 
We will show that if we choose a general hyperplane $H$ then the Abel-Jacobi map 
\[
\AJ\colon Z_H\to \IJac{Y_H}
\]
defines
a closed embedding on an open subset $Z_H^\circ$ and the complement $Z_H\backslash Z_H^\circ$ is contracted
to a point. 
The image of $\AJ$ is the theta-divisor $\Theta \subset \IJac{Y_H}$.

Recall from the description of $Z$ that we have an embedding $\mu\colon Y\hookrightarrow Z$.
We have $Z_H^\circ \simeq Z_H\setminus \mu(Y)$ and $Z_H\cap \mu(Y)\simeq Y_H$.
Hence the Abel-Jacobi map $\AJ\colon Z_H\to \IJac{Y_H}$
gives a resolution of the unique singular point of the theta-divisor and the exceptional
divisor of this map is isomorphic to $Y_H$. This explicit description of the
singularity of the theta-divisor first obtained in \cite{B2} implies 
Torelli theorem for cubic threefolds.

The fact that $Z_H$ is birational to the theta-divisor in $\IJac{Y_H}$ also
follows from \cite{I} (see also \cite[Proposition 4.2]{B1}).

\subsection{Differential of the Abel-Jacobi map}

As before, we will identify the open subset $Z_H^\circ$ with an open subset in the moduli space
of sheaves of the form $i_*L$, where $i\colon S\hookrightarrow Y_H$ is a hyperplane
section and $L$ is a sheaf which gives a determinantal representation (\ref{eqn_determinantal}) of this section.

The Abel-Jacobi map $\AJ\colon Z_H^\circ\to \IJac{Y_H}$ can be described as follows.
We use the Chern classes with values in the Chow ring $\CH(Y_H)$.
The second Chern class $c_2(i_*L) \in \CH^2(Y_H)$ is a cycle class of degree $3$.
Let $h\in \CH^1(Y_H)$ denote the class of a hyperplane section, then
$c_2(i_*L)-h^2$ is a cycle class homologous to zero, and it defines an element in the intermediate Jacobian.

Since $c_2(i_* L)$ can be represented by corresponding twisted cubics, the map above extends to $AJ: Z_H \to \IJac{Y_H}$.

\begin{lemma}\label{lem_dAJ}
The differential of the Abel-Jacobi map $d\AJ_{i_*L}\colon \Ext^1(i_*L,i_*L)\to H^{1,2}(Y_H)$
at the point corresponding to the sheaf $i_*L$ is given by
\begin{equation}\label{eqn_dAJ}
d\AJ_{i_*L}(u) = \frac12 \Tr(\Ati{i_*L}\circ u),
\end{equation}
for any $u\in \Ext^1(i_*L,i_*L)$.
\end{lemma}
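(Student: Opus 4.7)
The plan is to compute $d\AJ$ at $[i_*L]$ by realizing the Abel--Jacobi map through the second Chern class of a universal family and extracting the differential via the Atiyah class.

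First I would note that the tangent space $T_0\IJac{Y_H}$ is canonically identified with $H^{1,2}(Y_H)$, and that $\AJ$ sends $[i_*L]$ to the Abel--Jacobi class of the cycle $c_2(i_*L)-h^2\in \CH^2(Y_H)_{\mathrm{hom}}$; the cycle is indeed homologous to zero because $c_2(i_*L)$ has degree $3$ and $H^4(Y_H,\bbQ)=\bbQ\cdot h^2$. Next I would introduce the universal sheaf $\tilde{\FF}$ on $Y_H\times \MM_L^\circ$ with projections $p$ and $q$, and decompose its Atiyah class as $\Ati{\tilde{\FF}}=a_Y+a_\MM$ relative to the splitting $\Omega_{Y_H\times \MM_L^\circ}=p^*\Omega_{Y_H}\oplus q^*\Omega_{\MM_L^\circ}$, so that over the point $[\FF]=[i_*L]$ the horizontal part $a_Y$ restricts to $\Ati{\FF}$ and the vertical part $a_\MM$ restricts to the tautological Kodaira--Spencer class, sending $u\in \Ext^1(\FF,\FF)$ to itself.

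Then I would apply the formula $c_2(\tilde{\FF})=\tfrac12\bigl(c_1(\tilde{\FF})^2-\Tr(\Ati{\tilde{\FF}}^2)\bigr)$ and isolate the K\"unneth component of $c_2(\tilde{\FF})$ landing in $H^{1,2}(Y_H)\otimes \Omega^1_{\MM_L^\circ}|_{[\FF]}$; by definition this component is $d\AJ_{i_*L}$. The $c_1^2$ contribution vanishes because its mixed part pairs $c_1(\FF)\in H^{1,1}(Y_H)$ with a class in $H^{0,1}(Y_H)$, and $H^{0,1}(Y_H)=0$ since $Y_H$ is Fano. The mixed part of $\Tr(\Ati{\tilde{\FF}}^2)=\Tr((a_Y+a_\MM)^2)$ comes from the cross terms $\Tr(a_Y\cdot a_\MM+a_\MM\cdot a_Y)$; after restriction to the fibre and substitution of $u$ for $a_\MM$, cyclicity of the trace and the combinatorial factor $\tfrac12$ from the definition of $c_2$ collectively produce the value $\tfrac12\Tr(\Ati{\FF}\circ u)\in H^{1,2}(Y_H)$. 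Tracking the orientation convention for the Abel--Jacobi map then yields the claimed formula $d\AJ_{i_*L}(u)=\tfrac12\Tr(\Ati{i_*L}\circ u)$.

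The main obstacle is pinning down signs and numerical factors unambiguously, since they depend on several conventions: the normalization of the Atiyah class and of $c_2$, the combinatorial coefficients arising from expanding $(a_Y+a_\MM)^2$ and antisymmetrizing to land in $\Omega^2_{Y_H\times \MM_L^\circ}$, and the orientation used to identify $T_0\IJac{Y_H}$ with $H^{1,2}(Y_H)$. To fix these I would cross-check the outcome against Griffiths' classical description of the infinitesimal Abel--Jacobi map for a one-parameter family of smooth curves $C_t\subset Y_H$: applied to $\FF=\OO_{C_t}$, the right-hand side of the claimed formula can be computed explicitly in terms of the Kodaira--Spencer class of the family and must coincide with Griffiths' expression, fixing both the sign and the factor $\tfrac12$.
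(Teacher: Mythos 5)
Your overall framework (universal family, splitting of the Atiyah class into horizontal and vertical parts, identification of the vertical part with the Kodaira--Spencer map, vanishing of the $c_1^2$ cross-term because $H^1(Y_H)=0$) is sound, and it is a genuinely different route from the paper's. But there is a real gap at the central step: the assertion that the K\"unneth component of $c_2(\tilde{\FF})$ in $H^{1,2}(Y_H)\otimes \Omega^1_{\MM_L^\circ}|_{[\FF]}$ ``is by definition'' $d\AJ_{i_*L}$. This is not a definition. The Abel--Jacobi map is defined Hodge-theoretically on homologically trivial cycles, and identifying the derivative of the resulting normal function with a K\"unneth component of the class of the total cycle (let alone of a characteristic class of a non-locally-free universal sheaf) is precisely the analytic content of the lemma --- it is Griffiths' computation with chains and differential forms, extended from cycles/vector bundles to arbitrary coherent sheaves. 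The paper isolates exactly this content in Appendix~\ref{appendix}: Proposition~\ref{prop_dAJ_general} gives $d\AJ^p_{\FF}(v)=\Tr\bigl((-1)^{p-1}\Ati{\FF_0}^{p-1}\circ \mathrm{KS}_{\FF_0}(v)\bigr)$ for the Segre classes $s_p=p!\,ch_p$, proved by induction on the length of a locally free resolution with Griffiths' vector-bundle formula as the base case; Lemma~\ref{lem_dAJ} is then the one-line specialization $p=2$, $s_2=-2c_2$ (which is where the $\tfrac12$ comes from). Your plan hides all of this inside the phrase ``by definition.''

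The deferral of ``signs and numerical factors'' is also not a harmless bookkeeping issue here, because the factor $\tfrac12$ \emph{is} the content of the statement. If you carry out your own outline literally, the mixed part of $-\tfrac12\Tr\bigl((a_Y+a_{\MM})^2\bigr)$ is $-\Tr(a_Y\cdot a_{\MM})$ (the two cross-terms agree under the trace and cancel the $\tfrac12$), so the naive K\"unneth reading of $c_2(\tilde{\FF})$ evaluates to $\pm\Tr(\Ati{\FF}\circ u)$ with no $\tfrac12$; reconciling this with the claimed $\tfrac12\Tr(\Ati{\FF}\circ u)$ requires precisely the careful normalization of the normal-function derivative that you have postponed. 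Two smaller points to watch if you pursue your calibration against curves: on the threefold $Y_H$ the support $S$ is a divisor, so $c_1(i_*L)=[S]=h\neq 0$ (one needs only that it is \emph{constant} in the family for the $c_1^2$ term to drop out of the Abel--Jacobi map of differences); and applying $ch_2=\tfrac12\Tr(\Ati{}^2)$ to the non-locally-free sheaf $\tilde{\FF}$ requires knowing that the Atiyah--Chern character agrees with the topological one for coherent sheaves, which should be cited or deduced from a resolution --- the latter being essentially the induction the paper runs in the appendix.
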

\begin{proof}
We apply the general formula for the derivative of the Abel-Jacobi map, see Appendix \ref{appendix}, Proposition \ref{prop_dAJ_general}.
We have $c_1(i_*L) = 0$, so that $s_2(i_*L) = -2c_2(i_*L)$, which yields the $\frac12$ factor in the statement.
\end{proof}

It will be convenient for us to rewrite (\ref{eqn_dAJ}) in terms of the linkage class of a sheaf, see \cite{KM}.
We recall its definition in our particular case of the embedding $j\colon Y_H\hookrightarrow \bbP^4$.
If $\FF$ is a sheaf on $Y_H$ then the object $j^*j_*\FF\in \Db(Y_H)$ has non-zero cohomologies only in degrees
$-1$ and $0$. They are equal to $\FF\otimes N_{Y/\bbP^4}^\vee = \FF(-3)$ and $\FF$ respectively.
Hence the triangle
\[
\FF(-3)[1]\lrarr \Lder j^*j_*\FF\lrarr \FF\lrarr \FF(-3)[2]. 
\]
The last morphism in
this triangle is called the linkage class of $\FF$ and will be denoted by $\epsilon_\FF\colon \FF\to \FF(-3)[2]$.
The linkage class can also be described as follows (see \cite{KM}, Theorem 3.2): 
let us denote by $\kappa\in \Ext^1(\Omega_{Y_H},\OO_{Y_H}(-3))$
the extension class of the conormal sequence $0\to\OO_{Y_H}(-3)\to\Omega_{\bbP^4}|_{Y_H}\to \Omega_{Y_H}\to 0$; then
we have $\epsilon_{\FF} = (\mathrm{id}_{\FF}\otimes \kappa)\circ\Ati{\FF}$.

Note that composition with $\kappa$ gives an isomorphism of vector spaces $H^{1,2}(Y_H)=\Ext^2(\OO_{Y_H},\Omega_{Y_H})$
and $\Ext^3(\OO_{Y_H}, \OO_{Y_H}(-3)) = H^0(Y_H,\OO_{Y_H}(1))^*$. Composing the right hand side
of (\ref{eqn_dAJ}) with $\kappa$ and using the fact that taking traces commutes with compositions,
we obtain the following expression for $d\AJ(u)$ where $u\in\Ext^1(i_*L,i_*L)$:
\begin{equation}\label{eqn_dAJ2}
\kappa \circ d\AJ_{i_*L}(u) = \frac12 \Tr(\epsilon_{i_*L}\circ u) \in H^0(Y_H, \OO_H(1))^*
\end{equation}

\begin{prop}\label{prop_dAJ}
The differential of the Abel-Jacobi map (\ref{eqn_dAJ}) is injective.
\end{prop}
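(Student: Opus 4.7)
The plan is to prove injectivity of $d\AJ_{i_*L}$ by dualizing via Serre duality on the cubic threefold $Y_H$ (whose canonical bundle is $\OO_{Y_H}(-2)$) and reducing to a surjectivity statement. By formula (\ref{eqn_dAJ2}), injectivity is equivalent to the nondegeneracy in the first factor of the pairing
\[
\Ext^1(i_*L, i_*L) \otimes H^0(Y_H, \OO_{Y_H}(1)) \to \bbC, \qquad (u, s) \mapsto \Tr\bigl((s \cdot \epsilon_{i_*L}) \circ u\bigr),
\]
obtained by cupping the target $H^0(Y_H, \OO_{Y_H}(1))^*$ with sections $s$ and using compatibility of $\Tr$ with multiplication. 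Since the composition-and-trace pairing $\Ext^1(i_*L, i_*L) \otimes \Ext^2(i_*L, i_*L(-2)) \to H^3(Y_H, K_{Y_H}) = \bbC$ is a perfect Serre duality pairing, it suffices to show surjectivity of the linear map
\[
\Psi\colon H^0(Y_H, \OO_{Y_H}(1)) \to \Ext^2(i_*L, i_*L(-2)), \qquad s \mapsto s \cdot \epsilon_{i_*L}.
\]

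From Lemma \ref{lem-unobs} and the local-to-global spectral sequence, $\Ext^1(i_*L, i_*L) \simeq H^0(S, N_{S/Y_H}) = H^0(S, \OO_S(1))$ is $4$-dimensional (matching $\dim Z_H$). Analogously, using $K_S = \OO_S(-1)$ on the cubic surface $S$, one finds $\Ext^2(i_*L, i_*L(-2)) \simeq H^2(S, \OO_S(-2)) \simeq H^0(S, \OO_S(1))^*$, also $4$-dimensional. Moreover, any $s \in H^0(Y_H, \OO_{Y_H}(1))$ vanishing on $S$ is annihilated by $\epsilon_{i_*L}$ (since $i_*L(-3)$ is supported on $S$), so $\Psi$ factors through the restriction $H^0(Y_H, \OO_{Y_H}(1)) \to H^0(S, \OO_S(1))$; this restriction is surjective by the long exact sequence of $0 \to \OO_{Y_H} \to \OO_{Y_H}(1) \to \OO_S(1) \to 0$ together with $H^1(Y_H, \OO_{Y_H}) = 0$. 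It remains to prove that the induced map $\bar\Psi\colon H^0(S, \OO_S(1)) \to \Ext^2(i_*L, i_*L(-2))$ between $4$-dimensional spaces is an isomorphism.

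The main obstacle is this final injectivity of $\bar\Psi$, for which the cleanest route is a direct computation. Using the explicit description of the linkage class via the Koszul resolution of $Y_H \subset \bbP^4$, one identifies $\bar\Psi$ with the map induced by the bilinear pairing
\[
H^0(S, \OO_S(1)) \otimes H^0(S, \OO_S(1)) \xrightarrow{\text{mult}} H^0(S, \OO_S(2)) \xrightarrow{\bar\epsilon} \bbC,
\]
where $\bar\epsilon \in H^0(S, \OO_S(2))^*$ corresponds under Serre duality on $S$ to the image of $\epsilon_{i_*L}$ in $H^2(S, \OO_S(-3))$. Nondegeneracy of this pairing on $H^0(S, \OO_S(1))$---which amounts to showing that $\bar\epsilon$, interpreted as a quadratic form on the $4$-dimensional space of linear forms on $S$ induced by the cubic defining $Y_H$, has full rank for generic $H$---completes the proof.
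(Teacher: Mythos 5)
Your reduction is set up correctly: by (\ref{eqn_dAJ2}), injectivity of $d\AJ_{i_*L}$ is equivalent to non-degeneracy in $u$ of the pairing $(u,s)\mapsto \Tr\bigl((s\cdot\epsilon_{i_*L})\circ u\bigr)$, and since the Serre-duality pairing $\Ext^1(i_*L,i_*L)\otimes\Ext^2(i_*L,i_*L(-2))\to H^3(Y_H,K_{Y_H})$ is perfect, surjectivity of $\Psi\colon s\mapsto s\cdot\epsilon_{i_*L}$ would indeed suffice. The dimension counts ($\Ext^1(i_*L,i_*L)\simeq H^0(S,\OO_S(1))$ and $\Ext^2(i_*L,i_*L(-2))\simeq H^0(S,\OO_S(1))^*$, both $4$-dimensional) and the factorization of $\Psi$ through the surjective restriction $H^0(Y_H,\OO_{Y_H}(1))\to H^0(S,\OO_S(1))$ are also fine; this part of your argument is essentially the paper's Step 3, which identifies the Serre dual of the trace map $\Ext^3(\FF,\FF(-3))\to H^3(Y_H,\OO_{Y_H}(-3))$ with that same restriction map.

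The gap is the final step. You reduce everything to the injectivity of $\bar\Psi$, i.e.\ to the non-degeneracy of the quadratic form $s\mapsto\bar\epsilon(s^2)$ on $H^0(S,\OO_S(1))$, and then assert rather than prove it: ``has full rank for generic $H$'' is exactly the statement that still requires an argument, and the identification of $\bar\Psi$ with the multiplication pairing is itself only sketched (``one identifies'' hides a genuine computation with the local-to-global spectral sequence and the trace map). Moreover, the appeal to genericity of $H$ is misplaced: the proposition must hold at every point of $Z_H^\circ$, hence for every ADE linear section $S\subset Y_H$ and every determinantal sheaf $L$ on it, so a generic-position argument in $H$ alone cannot close the gap. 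The paper supplies precisely the missing input by a different route: it factors $\kappa\circ d\AJ_{i_*L}$ as $\Ext^1(\FF,\FF)\xrightarrow{\epsilon_\FF\circ-}\Ext^3(\FF,\FF(-3))\xrightarrow{\Tr}H^3(Y_H,\OO_{Y_H}(-3))$ and proves that the first map is an isomorphism using the defining triangle $\Lder j^*j_*\FF\to\FF\to\FF(-3)[2]$ together with the explicit resolution (\ref{eqn_resjF}) of $j_*\FF$, which yields $\Ext^2(\FF,\Lder j^*j_*\FF)=0$. If you want to keep your dual formulation, that same triangle is the natural tool for proving surjectivity of $\Psi$; a direct attack on the quadratic form would require an explicit identification of $\bar\epsilon$, which you have not carried out.
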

\begin{proof}
As before, we will denote by $i\colon S\hrarr Y_H$ and $j\colon Y_H\hrarr \bbP^4$ the
embeddings. A point of $Z_H^\circ$ is represented by a sheaf $i_*L$. 
Let us also use the notation $\FF = i_*L$.
It suffices to show that the map $u \mapsto \kappa \circ d\AJ_{i_*L}(u)$ is injective.

The proof is done in three steps.

{\it Step 1.} Let us construct a locally free resolution of $j_*\FF$.
We decompose $j_*\FF$ with respect to the exceptional collection
$\OO_{\bbP^4}(-2)$, $\OO_{\bbP^4}(-1)$, $\OO_{\bbP^4}$, $\OO_{\bbP^4}(1)$, $\OO_{\bbP^4}(2)$.
The sheaf $j_*\FF$ is
already left-orthogonal to $\OO_{\bbP^4}(2)$ and $\OO_{\bbP^4}(1)$ (see Lemma \ref{lem_L}).
It is globally generated by (\ref{eqn_determinantal}) and its left
mutation is the shift of the sheaf $\KK$ from the exact triple
$0\lrarr\KK\lrarr\OO_{\bbP^4}^{\oplus 3}\lrarr j_*\FF\lrarr 0$.
From cohomology exact sequence we see that $H^0(\bbP^4,\KK(1)) = \bbC^6$ and $H^k(\bbP^4,\KK(1))=0$
for $k\ge 1$. We can also check that $\KK(1)$ is globally generated (it is in fact
Castelnuovo-Mumford $0$-regular, as one can see using (\ref{eqn_determinantal})).
The left mutation of $\KK$ through $\OO_{\bbP^4}(-1)$ is the cone of
the surjection $\OO_{\bbP^4}(-1)^{\oplus 6}\to \KK$, and it lies in the subcategory generated
by $\OO_{\bbP^4}(-2)$. Since it has rank 3, this completes the construction of the
resolution for $j_*\FF$.
The resulting resolution is:
\begin{equation}\label{eqn_resjF}
0\lrarr\OO_{\bbP^4}(-2)^{\oplus 3}\lrarr \OO_{\bbP^4}(-1)^{\oplus 6}\lrarr \OO_{\bbP^4}^{\oplus 3}\lrarr j_*\FF\lrarr 0.
\end{equation}

{\it Step 2.} Let us show that the linkage class $\epsilon_\FF$ induces an isomorphism
\[
\Ext^1(\FF,\FF) \to \Ext^3(\FF,\FF(-3)).
\]
The object $\Lder j^*j_*\FF$ is included into the triangle
$$
\Lder j^*j_*\FF\lrarr \FF\stackrel{\epsilon_\FF}{\lrarr} \FF(-3)[2]\lrarr \Lder j^*j_*\FF[1].
$$
Applying $\Hom(\FF,-)$ to this triangle we find the following exact sequence:
$$
\Ext^1(\FF,\Lder j^*j_*\FF)\lrarr \Ext^1(\FF,\FF)\stackrel{\epsilon_\FF\circ-}{\lrarr} \Ext^3(\FF,\FF(-3))
\lrarr \Ext^2(\FF,\Lder j^*j_*\FF).
$$
Note that by (\ref{eqn_resjF}) the object $\Lder j^*j_*\FF$ is represented by a complex of the form
$0\to \OO_{Y_H}(-2)^{\oplus 3}\to \OO_{Y_H}(-1)^{\oplus 6}\to \OO_{Y_H}^{\oplus 3}\to 0$.
Let us check that $\Ext^2(\FF,\Lder j^*j_*\FF) = 0$. By Serre duality $\Ext^q(\FF,\OO_{Y_H}(-p))
=\Ext^{3-q}(\OO_{Y_H}(-p),\FF(-2))^* = H^{3-q}(Y_H,\FF(p-2))^*$ and from (\ref{eqn_determinantal})
we see that for $p=0$ and $1$ these cohomology groups vanish, and for $p=2$ the only
non-vanishing group corresponds to $q=3$. The spectral sequence computing 
$\Ext^k(\FF,\Lder j^*j_*\FF)$, obtained from the complex representing $\Lder j^*j_*\FF$, implies
that $\Ext^k(\FF,\Lder j^*j_*\FF)=0$ for $k\neq 1$ and $\Ext^1(\FF,\Lder j^*j_*\FF)=H^0(Y_H,\FF)^*=\bbC^3$.

We conclude that the map
$\Ext^1(\FF,\FF)\stackrel{\epsilon_\FF\circ-}{\lrarr} \Ext^3(\FF,\FF(-3))$ is surjective.
It is actually an isomorphism, because the vector spaces are of the same dimension.
The dimensions can be computed in the same way as in the proof of Lemma \ref{lem_symplectic}.

{\it Step 3.} 
Let us show that $\Tr: \Ext^3(\FF, \FF(-3)) \to H^3(Y_H, \OO_{Y_H}(-3))$ is injective.


Using Serre duality we identify the dual to the trace map with
\[
\Tr^*: H^0(Y_H, \OO_{Y_H}(1)) \to \Hom(\FF, \FF(1)).
\]
One can show as in the proof of Lemma \ref{lem-unobs} that $\Hom(\FF, \FF(1)) = H^0(S, \OO(1))$ and postcomposing
$\Tr^*$ with this isomorphism gives the restriction map
\[
H^0(Y_H,\OO_{Y_H}(1))\to H^0(S,\OO_S(1)) 
\]
which is surjective.

We see that the composition
\[
\Ext^1(\FF, \FF) \to \Ext^3(\FF, \FF(-3)) \to H^3(Y_H, \OO(-3)) 
\]
is injective and the proof is finished by means of formula (\ref{eqn_dAJ2}).
\end{proof}

\subsection{Image of the Abel-Jacobi map} 


\begin{thm} \label{thm_AJ}
Assume that $Y_H$ is smooth and all its hyperplane sections have at worst ADE singularities.
Then the image of the Abel-Jacobi map $\AJ\colon Z_H\to \IJac{Y_H}$ is the theta-divisor $\Theta \subset \IJac{Y_H}$.
The map $\AJ$ is an embedding on $Z_H^\circ$ and contracts the divisor $Y_H = Z_H \backslash Z_H^\circ$ 
to the unique singular point of $\Theta$.
\end{thm}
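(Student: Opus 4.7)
The plan is to prove the theorem in three steps: (i) $\AJ$ contracts the boundary divisor $Y_H$ to a single point $p\in\IJac{Y_H}$; (ii) the scheme-theoretic image of $\AJ$ is $\Theta$ and the morphism $\AJ\colon Z_H\to\Theta$ is birational; (iii) $\AJ$ restricts to an isomorphism between $Z_H^\circ$ and $\Theta\setminus\{p\}$, with $p=\mathrm{Sing}\,\Theta$.

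Step (i) is a clean consequence of rational connectedness. Since $Y_H=Z_H\cap\mu(Y)$ is a smooth cubic threefold, it is a Fano variety and in particular satisfies $H^0(Y_H,\Omega^1)=0$. Every holomorphic $1$-form on the abelian variety $\IJac{Y_H}$ therefore pulls back to zero along $\AJ|_{Y_H}$, and this is enough to force $\AJ|_{Y_H}$ to be constant, yielding the desired point $p\in\IJac{Y_H}$.

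Step (ii) rests on the birational equivalence $Z_H\dashrightarrow\Theta$ obtained from \cite{I} and \cite[Proposition 4.2]{B1}. The main thing to check is that this classical birational map coincides with $\AJ$ on a general smooth twisted cubic lying in a smooth hyperplane section of $Y_H$, after which properness (which is automatic because $Z_H$ is projective) gives that $\AJ\colon Z_H\to\Theta$ is a proper birational surjection.

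For step (iii), Proposition \ref{prop_dAJ} tells us that $\AJ|_{Z_H^\circ}$ is unramified. Since $\Theta$ is normal (being a divisor in an abelian variety with only an isolated singularity) and $\AJ$ is proper and birational, Zariski's connectedness theorem shows that every fiber of $\AJ$ is connected. The fiber $\AJ^{-1}(p)$ contains $Y_H$; any additional point in $Z_H^\circ$ mapping to $p$ would form an isolated component disjoint from $Y_H$, contradicting connectedness. Therefore $\AJ^{-1}(p)=Y_H$, and $\AJ|_{Z_H^\circ}\colon Z_H^\circ\to\Theta\setminus\{p\}$ is proper, étale and birational, hence an isomorphism. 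The identification $p=\mathrm{Sing}\,\Theta$ is then immediate from Beauville's theorem \cite{B2} that $\Theta$ has a unique singular point. The main obstacle among these steps is step (ii), since it requires explicitly matching the birational equivalence from \cite{I,B1} with $\AJ$ on twisted cubics; the other two steps are formal consequences of Proposition \ref{prop_dAJ} together with standard properties of Fano varieties and proper birational morphisms.
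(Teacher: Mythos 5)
Your steps (i) and (iii) are sound and essentially match the paper: the contraction of $Y_H$ follows from $H^0(Y_H,\Omega^1_{Y_H})=0$, and once one knows that $\AJ$ is generically injective, the combination of Proposition \ref{prop_dAJ} (unramifiedness on $Z_H^\circ$), normality of $\Theta$, and properness does give the open embedding; your use of Zariski connectedness to pin down $\AJ^{-1}(p)=Y_H$ is a reasonable (slightly more explicit) version of the paper's appeal to these same facts.

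The gap is step (ii), which you yourself flag as ``the main obstacle'' and then do not carry out. Knowing from \cite{I} and \cite[Proposition 4.2]{B1} that $Z_H$ is \emph{abstractly} birational to $\Theta$ does not tell you that the specific morphism $\AJ$ is surjective onto $\Theta$ and of degree one; that compatibility is exactly the content to be proved, and deferring it to an unverified match with the classical construction leaves the central claim of the theorem unestablished. The paper closes this gap with a concrete geometric argument that you would need to supply: for a general point of $Z_H$, represented by a smooth twisted cubic $C$ on a smooth cubic surface $S\subset Y_H$, the class $[C-C_0]$ (with $C_0$ a hyperplane section of $S$) is a root of the $E_6$ lattice of $S$, hence can be written as a difference of lines $l_1-l_2$ in exactly $6$ ways. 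This both places $\AJ(z)$ on $\Theta$ (via the Clemens--Griffiths description of $\Theta$ as the image of $X\times X\to \IJac{Y_H}$, $X$ the Fano surface of lines) and, by comparing the $6{:}1$ map $X\times X\to\Theta$ with the $6{:}1$ correspondence $X\times X\dashrightarrow Z_H$, shows that $\AJ$ is generically of degree one. Without this (or an equivalent identification of $\AJ$ with the map of \cite{I,B1}), steps (i) and (iii) have nothing to feed on, so as written the proposal does not constitute a proof.
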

\begin{proof}
The divisor $Y_H$ is contracted by the Abel-Jacobi map 
to a point because $Y_H$ is a cubic threefold which has no global one-forms.

To identify the image of $\AJ$ it is enough to check that a general point of $Z_H$ is mapped to a point of $\Theta$.
General point $z\in Z_H$ is represented by a smooth twisted cubic $C$ on a smooth hyperplane section $S\subset Y_H$.
Denote by $C_0\subset S$ a hyperplane section of $S$. Then $C-C_0$ is a degree zero cycle on $Y_H$ and $z$ is
mapped to the corresponding element of the intermediate Jacobian. The cohomology class $[C-C_0]\in H^2(S,\bbZ)$ is
orthogonal to the class of the canonical bundle $K_S$ and has square $-2$. Hence it is a root in the $E_6$ lattice.
All such cohomology classes can be represented by differences of pairs of lines $l_1-l_2$ in 6 different
ways. 

Recall that the Fano variety of lines on the
cubic threefold $Y_H$ is a surface which we will denote by $X$. It was shown in \cite{CG} that
the theta divisor $\Theta\subset \IJac{Y_H}$ can be described as the image of the map $X\times X\to \IJac{Y_H}$
which sends a pair of lines $(l_1,l_2)$ to the point in $\IJac{Y_H}$ corresponding to degree zero
cycle $l_1-l_2$. 
The map $X \times X \to \Theta$ has degree $6$.
We get a commutative diagram:
$$
\begin{tikzcd}[]
X \times X \arrow[dashed]{d}[swap]{6:1} \arrow{r}{6:1} & \Theta \\
Z_H \arrow{ur}[swap]{\AJ} &   
\end{tikzcd}
$$

It follows from the diagram above that $\AJ$ is generically of degree one.
Since $\AJ$ is \'etale on $Z_H^\circ$ by Proposition \ref{prop_dAJ} and the theta-divisor $\Theta$ is a normal variety \cite[Proposition 2, \S3]{B2}
we deduce that $\AJ: Z_H^\circ \to \Theta$ is on open embedding. This completes the proof.
\end{proof}

\appendix

\section{Differential of the Abel-Jacobi map}\label{appendix}


Let $X$ be a smooth complex projective variety of dimension $n$. 
Recall that the
$p$-th intermediate Jacobian of $X$ is the complex torus 
$$
\Jac^p(X) = H^{2p-1}(X,\bbC)/(F^pH^{2p-1}(X,\bbC)+H^{2p-1}(X,\bbZ)),
$$
where $F^{\sdot}$ denotes the Hodge filtration. 
We use the Abel-Jacobi map \cite[Appendix A]{G2}
\[
\AJ^p\colon \CH^p(X,\Z)_{h}\to \Jac^p(X)
\]
where $\CH^p(X)_{h}$ is the group of homologically trivial codimension $p$ algebraic cycles on $X$
up to rational equivalence.

For a coherent sheaf $\FF_0$ on $X$ we consider integral characteristic classes
\[
s_p(\FF_0) = p! \cdot ch_p(\FF_0) \in CH^p(X,\Z)
\]
where $ch_p(\FF_0)$ is the $p$'th component of the Chern character $ch(\FF_0)$. 
These classes can be expressed in terms of the Chern classes using Newton's formula \cite[\S16]{MS}.

Let us consider a deformation of $\FF_0$ over a smooth base $B$ with base point $0\in B$,
that is a coherent sheaf $\FF$ on $X\times B$ flat over $B$ and with $\FF_0 \simeq \FF|_{\pi_B^{-1}(0)}$.
We will denote by $\pi_B$ and $\pi_X$ the two projections from $X\times B$
and by $\FF_t$ the restriction of $\FF$ to $\pi_B^{-1}(t)$, $t\in B$. 
In this setting the difference $s_p(\FF_t)-s_p(\FF_0)$ is contained in $\CH^p(X, \Z)_{h}$
and we get an induced Abel-Jacobi map 
\[
\AJ^p_{\FF}: B \to J^p(X). 
\]

Since the classes $s_p$ are additive, it follows that 
if $0 \to \FF' \to \FF \to \FF'' \to 0$ is a short exact sequences of sheaves on $X \times B$ flat over $B$, then
\begin{equation}\label{AJ-add}
\AJ^p_{\FF} = \AJ^p_{\FF'} + \AJ^p_{\FF''}.
\end{equation}

Recall that a coherent sheaf $\FF_0$ has an Atiyah class $\Ati{\FF_0}\in \Ext^1(\FF_0,\FF_0\otimes\Omega_X)$ \cite[1.6]{KM}.
The vector space $\bigoplus_{p,q\ge 0}\Ext^q(\FF_0,\FF_0\otimes\Omega_X^p)$ has the structure of a
bi-graded algebra with multiplication induced by Yoneda product of $\Ext$'s and exterior
product of differential forms and this defines the $p$'th power of the Atiyah class
\[
\Ati{\FF_0}^p \in \Ext^p(\FF_0,\FF_0\otimes\Omega^p_X).
\]

Given any tangent vector $v\in T_{0}B$ we shall denote its Kodaira-Spencer class
by $\mathrm{KS}_{\FF_0}(v)\in \Ext^1(\FF_0,\FF_0)$
and we consider the composition $\Ati{\FF_0}^p \circ \mathrm{KS}_{\FF_0}(v) \in \Ext^{p+1}(\FF_0, \FF_0 \otimes \Omega_X^p)$.

We will also use the trace maps \cite[1.2]{KM}
$$\Tr\colon \Ext^q(\FF_0,\FF_0\otimes\Omega_X^p)\to \Ext^q(\OO_X,\Omega_X^p)=H^{p,q}(X).$$

\begin{prop}\label{prop_dAJ_general}
In the above setting the differential of the Abel-Jacobi map $\AJ_{\FF}^p: B \to \Jac^p(X)$, $p \ge 2$ at $0 \in B$ is given by
\begin{equation}\label{eqn_dAJ_general}
d\AJ^p_{\FF,0}(v)=\Tr\bigl( (-1)^{p-1}\Ati{\FF_0}^{p-1} \circ \mathrm{KS}_{\FF_0}(v)\bigr),
\end{equation}
for any $v\in T_{0}B$. The right hand side is an element of 
$H^{p-1,p}(X) \subset H^{2p-1}(X,\bbC)/F^pH^{2p-1}(X,\bbC)$.
\end{prop}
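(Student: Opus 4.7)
My strategy is to reduce to the case of a family of vector bundles via a locally free resolution, use the Atiyah-Chern-Weil expression of the Chern character in terms of the Atiyah class, and then decompose the Atiyah class on $X \times B$ into a horizontal and a vertical piece, the latter being the Kodaira-Spencer class.

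First I would recall Griffiths' description \cite[Appendix A]{G2} of $d\AJ^p$: the derivative at $0 \in B$ on $v \in T_0 B$ is the projection onto $H^{p-1,p}(X)$ of the derivative at $t = 0$ of the Hodge class of $s_p(\FF_t)$. Since $X$ is smooth projective, $\FF_0$ admits a finite locally free resolution, and after shrinking $B$ to an affine neighborhood of $0$, flatness of $\FF$ lets me extend this to a resolution by $B$-flat vector bundles. Additivity of Segre classes gives the additivity of $\AJ^p_\FF$ as in (\ref{AJ-add}); Atiyah and Kodaira-Spencer classes are similarly additive on distinguished triangles. It therefore suffices to establish (\ref{eqn_dAJ_general}) when $\FF = \EE$ is a family of vector bundles.

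For such $\EE$ the Atiyah-Chern-Weil formula gives $\mathrm{ch}_p(\EE) = \frac{1}{p!}\Tr(\Ati{\EE}^p)$ in the Hodge cohomology of $X \times B$. Using the canonical splitting of $\Omega_{X \times B}|_{X \times \{0\}}$ into $\pi_X^* \Omega_X$ and the pullback of $(T_0 B)^*$, the restriction of $\Ati{\EE}$ to the central fiber decomposes as $\Ati{\EE_0} + \alpha$, where $\alpha \in \Ext^1(\EE_0, \EE_0) \otimes (T_0 B)^*$ is identified with the Kodaira-Spencer map $v \mapsto \mathrm{KS}_{\EE_0}(v)$. Differentiating along $v$ then picks out exactly the monomials in the expansion of $\Ati{\EE}^p|_0$ that contain one factor $\mathrm{KS}_{\EE_0}(v)$ and $p - 1$ factors $\Ati{\EE_0}$. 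By the cyclic property of $\Tr$ combined with the graded commutativity of Yoneda composition, the $p$ possible positions contribute equally up to an overall sign $(-1)^{p-1}$, which comes from moving the degree-one class $\mathrm{KS}_{\EE_0}(v)$ past the $p-1$ degree-one classes $\Ati{\EE_0}$. Multiplication by the $p!$ converting $\mathrm{ch}_p$ to $s_p$ cancels the $\tfrac{1}{p!}$ and leaves precisely (\ref{eqn_dAJ_general}).

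The main obstacle is the sign bookkeeping: Yoneda composition of odd-degree classes introduces signs that interact nontrivially with the cyclic trace identity, and this is precisely the source of the $(-1)^{p-1}$. A secondary point is to justify the reduction via resolutions, namely to check that both sides of (\ref{eqn_dAJ_general}) are genuinely additive on short exact sequences; this follows from the standard extension of Atiyah classes to perfect complexes, but needs to be verified uniformly at the level of Hodge components.
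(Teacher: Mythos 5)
Your overall route --- kill the general case by a locally free resolution and additivity of Segre classes, then handle a family of vector bundles directly --- is the same reduction the paper uses, but the paper organizes it as an induction that peels off one syzygy at a time, $0\to\GG\to\pi_B^*{\pi_B}_*(\FF(k))\otimes\OO_X(-k)\to\FF\to 0$. That choice is not cosmetic: the middle term is a deformation-trivial family, so its Abel--Jacobi map is constant and its Kodaira--Spencer class is zero, and both sides of (\ref{eqn_dAJ_general}) visibly just change sign when passing from $\FF$ to $\GG$. Your version invokes additivity for a whole resolution at once, which requires the right-hand side of (\ref{eqn_dAJ_general}) to be additive for extensions whose middle term genuinely deforms; that is true but is precisely the point you defer (``needs to be verified uniformly''), whereas the paper's one-step trick avoids it entirely.

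The genuine gap is in the vector bundle case, which the paper does not reprove (it quotes Griffiths \cite{G1}, formula 6.8) but which you propose to derive from $ch_p(\EE)=\frac{1}{p!}\Tr(\Ati{\EE}^p)$. As written your bookkeeping does not produce the stated formula. Expanding $\Tr\bigl((\Ati{\EE_0}+\mathrm{KS})^p\bigr)$ and keeping the part linear in $\mathrm{KS}$ gives $p$ monomials, each equal up to sign to $\Tr(\Ati{\EE_0}^{p-1}\circ\mathrm{KS}_{\EE_0}(v))$; combined with the factor $p!$ from $s_p=p!\cdot ch_p$ this yields $p\cdot(\pm 1)\,\Tr(\Ati{\EE_0}^{p-1}\circ\mathrm{KS}_{\EE_0}(v))$, i.e.\ an extra factor of $p$ relative to (\ref{eqn_dAJ_general}). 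So either the identification of $d\AJ^p_{\FF,0}(v)$ with the K\"unneth component of the class of $s_p(\EE)$ on $X\times B$ containing one covector from $T_0B^*$ carries a normalization you have not stated, or the computation is off; you never pin down the precise form of ``Griffiths' description'' you start from, and that is exactly where this constant is decided. The sign argument is also internally inconsistent: $\Ati{\EE_0}$ has bidegree $(1,1)$, hence even total degree, and Yoneda composition is not graded commutative --- only the trace is graded cyclic --- so if, as you claim, transposing $\mathrm{KS}$ past each $\Ati{\EE_0}$ costs a sign, the $p$ positions contribute with alternating signs and cannot all be ``equal up to an overall $(-1)^{p-1}$''. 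Both issues must be resolved consistently with a single convention for the bigraded algebra $\bigoplus_{p,q}\Ext^q(\EE_0,\EE_0\otimes\Omega_X^p)$; the safe fix is the paper's, namely to take the bundle case from \cite{G1} and prove only the sign flip under the syzygy step.
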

\begin{proof}
We argue by induction on the length of a locally free resolution of $\FF$. 
The base of induction is the case when $\FF_0$ is a vector bundle. Then the result is essentially contained
in the paper of Griffiths \cite{G1} (in particular formula 6.8). We will show how to do
the induction step. We note that the statement is local, so we may replace the base
$B$ by an open neighborhood of $0\in B$ every time it is necessary. In particular we assume that $B$ is affine.

By our assumptions $X$ is projective and we denote by $\OO_X(1)$ an ample line bundle.
Then we can find $k$ big enough, so that $\FF(k)$ is generated by global sections
and has no higher cohomology. We define
a sheaf $\GG$ on $X\times B$ as the kernel of the natural map:
$$
0\lrarr\GG\lrarr\pi_B^*{\pi_B}_*(\FF(k))\otimes\OO_X(-k)\lrarr \FF\lrarr 0.
$$
Since $\FF$ is flat over $B$ and ${\pi_B}_*(\FF_0(k))$ 
is a vector bundle on $B$ for $k$ large enough \cite[Proof of Theorem 9.9]{H},
the sheaf $\GG$ is flat over $B$.

It follows from (\ref{AJ-add}) that 
$\AJ_{\GG}^p = -\AJ_{\FF}^p$.
Since homological dimension of $\GG$ has dropped by one,
induction hypothesis yields the formula (\ref{eqn_dAJ_general}) for $\GG$. It remains to relate right hand side
of (\ref{eqn_dAJ_general}) for $\GG_0$ and for $\FF_0$.

Using functoriality of the Kodaira-Spencer classes 
we obtain the following morphism of triangles:
$$
\begin{tikzcd}[]
\GG_0 \dar{u'}\rar & H^0(X,\FF_0(k))\otimes\OO_X(-k) \dar{0}\rar & \FF_0 \dar{u}\rar & \GG_0[1] \dar{u'[1]} \\
\GG_0[1]\rar       & H^0(X,\FF_0(k))\otimes\OO_X(-k)[1]\rar      & \FF_0[1]\rar      & \GG_0[2]
\end{tikzcd}
$$
where $u = \mathrm{KS}_{\FF_0}(v)\in \Ext^1(\FF_0,\FF_0)$ and $u' = \mathrm{KS}_{\GG_0}(v)\in \Ext^1(\GG_0,\GG_0)$.
Composing the vertical arrows with $\Ati{\FF_0}^{p-1}$, $\Ati{\OO_X(-k)}^{p-1}$ and $\Ati{\FF_0}^{p-1}$ respectively
and using the additivity of traces we get 
$\Tr(\Ati{\FF_0}^{p-1}\circ \mathrm{KS}_{\FF_0}(v)) = -\Tr(\Ati{\GG_0}^{p-1}\circ \mathrm{KS}_{\GG_0}(v))$
because the map in the middle is zero. This completes the induction step.
\end{proof}


\begin{thebibliography}{HKLR}

\bibitem[AL]{AL} N. Addington, M. Lehn, {\it On the symplectic 8-fold associated to pfaffian cubics}, arXiv:1404.5657

\bibitem[B1]{B1} A. Beauville, {\it Vector bundles on the cubic threefold}, Symposium in Honor of C. H. Clemens,
Contemporary Mathematics 312, 71-86; AMS (2002).

\bibitem[B2]{B2} A. Beauville, {\it Les singularit\'es du diviseur {$\Theta $} de la jacobienne
interm\'ediaire de l'hypersurface cubique dans {${\bf P}^{4}$}}, Algebraic threefolds ({V}arenna, 1981),
Lecture Notes in Math., Vol. 947, 190--208, Springer, Berlin-New York, 1982.

\bibitem[BF]{BF} R.-O. Buchweitz, H. Flenner, {\it A semiregularity map for modules and applications to deformations},
Compositio Math., Vol. 137, 2003, No. 2, 135--210.

\bibitem[CG]{CG} H. Clemens, P. Griffiths, {\it The intermediate Jacobian of the cubic threefold},
Ann. of Math. (2), Vol. 95, 1972, 281--356.

\bibitem[dJS]{dJS} J. de Jong, J. Starr, {\it Cubic fourfolds and spaces of rational curves}, Ill. J. Math. 48 (2004), 415- 450.

\bibitem[EPS]{EPS} G. Ellingsrud, R. Piene, S. A. Str{\o}mme, {\it On the variety of nets of quadrics defining twisted curves}. In:
F. Ghione, Ch. Peskine, E. Sernesi: Space curves. Lecture Notes in Math. 1266, Springer 1987.

\bibitem[G1]{G1} P. Griffiths, {\it Some results on algebraic cycles on algebraic manifolds},
{Algebraic {G}eometry ({I}nternat. {C}olloq., {T}ata {I}nst. {F}und. {R}es., {B}ombay, 1968)},
93--191, Oxford Univ. Press, London, 1969.

\bibitem[G2]{G2} P. Griffiths, 
{\it Periods of integrals on algebraic manifolds, III (some global differential-geometric properties of the period mapping)}
Inst. Hautes \'{E}tudes Sci. Publ. Math. No. 38 1970 125--180.

\bibitem[H]{H}
R. Hartshorne,
{\it Algebraic geometry},
Graduate Texts in Mathematics, No. 52. Springer-Verlag, New York-Heidelberg, 1977.

\bibitem[HL]{HL} D. Huybrechts, M. Lehn, {\it The geometry of moduli spaces of sheaves. Second edition}.
Cambridge University Press, Cambridge, 2010. xviii+325 pp.

\bibitem[I]{I} A. Iliev, {\it Minimal sections of conic bundles}, Boll. Unione Mat. Ital. Sez. B Artic. Ric. Mat. (8),
Vol. 2, 1999, No. 2, 401--428.

\bibitem[K]{K} A. Kuznetsov, {\it Derived category of a cubic threefold and the variety {$V_{14}$}},
Tr. Mat. Inst. Steklova, Vol. 246, 2004, 183--207.

\bibitem[KM]{KM} A. Kuznetsov, D. Markushevich, {\it Symplectic structures on moduli spaces of sheaves via the Atiyah class}, J. Geom.
Phys. 59 (2009), 843--860.

\bibitem[LLSvS]{LLSvS} C. Lehn, M. Lehn, C. Sorger, D. van Straten, {\it Twisted cubics on cubic fourfolds}, arXiv:1305.0178

\bibitem[LMS]{LMS} M. Lahoz, E. Macri, P. Stellari, unpublished manuscript.

\bibitem[MS]{MS}
J. Milnor, J. D. Stasheff,
{\it Characteristic classes}, 
Annals of Mathematics Studies, No. 76. Princeton University Press, Princeton, N. J.; University of Tokyo Press, Tokyo, 1974. 


\bibitem[O]{O} G. Ouchi, {\it Lagrangian embeddings of cubic fourfolds containing a plane}, arXiv:1407.7265

\bibitem[OSS]{OSS} C. Okonek, M. Schneider, H. Spindler, {\it Vector bundles on complex projective spaces}.
Progress in Mathematics, 3. Birkh\"auser, Boston, Mass., 1980. vii+389 pp.

\bibitem[V]{V} C. Voisin, {\it Sur la stabilit\'e des sous-vari\'et\'es lagrangiennes des
vari\'et\'es symplectiques holomorphes}, Complex projective geometry ({T}rieste, 1989/{B}ergen, 1989),
London Math. Soc. Lecture Note Ser., Vol. 179, 294--303.

\end{thebibliography}
\end{document}